\numberwithin{equation}{section} 
\newcommand{\udef}{\mathrel{\mathop:}=}
\newcommand{\R}{\mathbb{R}}
\newcommand{\N}{\mathbb{N}}
\newcommand{\de}{\mathrm{d}}
\theoremstyle{plain}
\newtheorem{thm}{Theorem}[section]
\newtheorem{prop}[thm]{Proposition}
\newtheorem{lem}[thm]{Lemma}
\newtheorem{rem}[thm]{Remark}
\newtheorem{exm}[thm]{Example}
    \title[Existence, uniqueness and approximation of solutions to CDDEs]{Existence, uniqueness and approximation of solutions to Carath\'eodory delay differential equations}
\author[F.V. Difonzo]{Fabio~V. Difonzo}
\address{Dipartimento di Matematica, Universit\`a degli Studi di Bari Aldo Moro, Via E. Orabona 4, 70125 Bari, Italy}
\email{fabio.difonzo@uniba.it}
\author[P. Przyby{\l}owicz]{Pawe{\l} Przyby{\l}owicz}
\address{AGH University of Science and Technology,
Faculty of Applied Mathematics,
Al. A.~Mickiewicza 30, 30-059 Krak\'ow, Poland}
\email{pprzybyl@agh.edu.pl, corresponding author}
\author[Y. Wu]{Yue Wu}
\address{Department of Mathematics and Statistics, University of Strathclyde, Glasgow, UK}
\email{yue.wu@strath.ac.uk}
\begin{document}
\begin{abstract}
    In this paper we address the existence, uniqueness and approximation of solutions of delay differential equations (DDEs) with Carath\'eodory type right-hand side functions. We provide construction of randomized Euler scheme for DDEs and investigate its error. We also report results of numerical experiments.
    \newline\newline
Mathematics Subject Classification: 65C20, 65C05, 65L05
\end{abstract}
\keywords{delay differential equations, randomized  Euler scheme, existence and uniqueness, Carath\'eodory type conditions}
\maketitle

\section{Introduction}
We deal with approximation  of solutions to the following delay differential equations (DDEs)
\begin{equation}
\label{eq:DiscDDE2}
\begin{cases}
x'(t)=f(t, x(t),x(t-\tau)), & t\in [0,(n+1)\tau], \\
x(t)=x_{0}, & t\in[-\tau,0),
\end{cases}
\end{equation}
with the constant time-lag $\tau\in(0,+\infty)$, fixed time horizon $n\in\mathbb{N}$,  $f:[0,(n+1)\tau]\times\R^d\times\R^d\mapsto\R^d$, and $x_0\in\R^d$. We assume that $f$ is integrable with respect to $t$ and (at least) continuous with respect to $(x,z)$. Hence, we consider Carath\'eodory type conditions for $f$.

Motivation of considering such DDEs comes, for example,  from the problem of modeling switching systems with memory, see \cite{hale1977theory}, \cite{Hale1993IntroductionTF}. Moreover, another inspiration follows from delayed differential equations with rough paths of the form
\begin{equation}\label{eq:DiscDDE21}
\begin{cases}
\de U(t)=a(U(t),U(t-\tau))+\de Z(t), & t\in [0,(n+1)\tau], \\
U(t)=U_{0}, & t\in[-\tau,0),
\end{cases}
\end{equation}
where $Z$ is an integrable perturbation (which might be of stochastic nature). Then $x(t)=U(t)-Z(t)$ satisfies the (possibly random) DDE \eqref{eq:DiscDDE2} with $f(t,x,z)=a(x+Z(t),z+Z(t-\tau))$ and $x(t)=U_0$, where we assume that $Z(t)=0$ for $t\in [-\tau,0]$. In this case the function $f$ inherits from $Z$ its low smoothness with respect to the variable $t$. (The exemplary equation \eqref{eq:DiscDDE21} is a generalization of the ODE with rough paths discussed in \cite{RKYW2017}.)

In the case of classical assumptions (such as $C^r$-regularity of $f=f(t,x,z)$ wrt all variables $t,x,z$) imposed on the right-hand side function errors for deterministic schemes have been established, for example, in the book \cite{bellen1}, which is the standard reference. See also \cite{CZPMPP}, where  error of the Euler scheme has been investigated for some class of nonlinear DDEs under nonstandard assumptions, such as one-side Lipschitz condition and local H\"older continuity. In contrast, much less is known about the approximation of solutions of  DDEs with less regular Carath\'eodory right-hand side function $f$. In the case of Carath\'eodory ODEs we need to turn to randomized algorithms, such as randomized Euler scheme, since it is well-known that there is lack of convergence for deterministic algorithms, see \cite{RKYW2017}. This behavior is inherited by DDEs, since ODEs form a subclass of DDEs. Hence, we define randomized version of the Euler scheme that is suitable for DDEs of the form \eqref{eq:DiscDDE2}.

While the randomized algorithms for ODEs have been widely investigated in the literature (see, for example, \cite{bochacik2}, \cite{BGMP2021}, \cite{daun1}, \cite{hein_milla1}, \cite{jen_neuen1}, \cite{BK2006}, \cite{RKYW2017}), according to our best knowledge this is the first paper that defines randomized Euler scheme and rigorously investigates its error for (Carath\'eodory type)  DDEs.

The main contributions of the paper are as follows:
\begin{itemize}
    \item we show existence, uniqueness, and H\"older regularity of the solution to \eqref{eq:DiscDDE2} when the right-hand side function $f=f(t,x,z)$ is only integrable with respect to $t$ and satisfies local Lipschitz assumption with respect to $(x,z)$ (Theorem \ref{sol_dde_prop_1}), 
    \item we perform rigorous error analysis of the randomized Euler scheme applied to \eqref{eq:DiscDDE2} when the right-hand side function $f=f(t,x,z)$ is only integrable with respect to $t$, satisfies global Lipschitz condition with respect to $x$ and it is globally H\"older continuous with respect to $z$ (Theorem \ref{rate_of_conv_expl_Eul}),
    \item we report results of numerical experiments that show stable error behaviour as stated in Theorem \ref{rate_of_conv_expl_Eul}.
\end{itemize}
In addition, as a consequence of Theorem \ref{sol_dde_prop_1} we establish almost sure convergence of the randomized Euler scheme, see Proposition \ref{as_conv}.

We want to stress here that the techniques used when proving upper error bounds in  Theorem  \ref{rate_of_conv_expl_Eul} differ significantly comparing to that used in \cite{bochacik2}, \cite{BGMP2021}, \cite{daun1}, \cite{hein_milla1}, \cite{jen_neuen1}, \cite{RKYW2017} for randomized  algorithms defined for ordinary differential equations. Mainly, due to the fact that DDEs have to be considered interval-by-interval we developed a suitable proof technique that is based on mathematical induction. In particular, suitable inductive assumptions have to be related with the H\"older continuity of $f=f(t,x,z)$ with respect to the (delayed) variable $z$. It can be observed from the proof that the main result from \cite{RKYW2017} can be applied only for the initial inductive step, when the H\"older continuity of $f$ with respect to $z$ does not make an impact, equivalently, when the CDDE and its numerical counter part are subject to the same value of the delay variable. As the systems iterate over time, a different strategy is required to handle the effect of the delay variable. An auxiliary randomized Euler scheme, which is not practical but theoretically useful, is therefore introduced for error decomposition to leverage the inductive estimate from previous steps as well as the randomized quadrature rule from \cite{RKYW2017}. 

The structure of the article is as follows. Basic notions, definitions together with assumptions and definition of the randomized Euler scheme are given in Section 2. All Section 3 is devoted to the issue of existence and uniqueness of solutions of the  Carath\'eodory type DDEs \eqref{eq:DiscDDE2} in the case when $f=f(t,x,z)$ is only integrable with respect to $t$ and satisfies local Lipschitz assumption with respect to $(x,z)$. Section 4 contain proof of the main result of the paper (Theorem \ref{sol_dde_prop_1}) that states upper bounds on the error of the randomized Euler scheme. In Section 5 we report results of numerical experiments. Finally, Appendix contains auiliary results for Carath\'eodory type ODEs that we use in the paper.


\section{Preliminaries}
 By $\|\cdot\|$ we mean the Euclidean norm in $\R^d$.
We consider a  complete probability space $(\Omega,\Sigma,\mathbb{P})$. For a random variable $X:\Omega\to\mathbb{R}$ we denote by $\|X\|_{L^p(\Omega)}=(\mathbb{E}|X|^p)^{1/p}$, where $p\in [2,+\infty)$.

Let us fix the {\it horizon parameter} $n\in\mathbb{N}$. On the right-hand side function $f$ we impose the following assumptions:
\begin{enumerate}[label=\textbf{(A\arabic*)},ref=(A\arabic*)]
    \item\label{ass:A1} $f(t,\cdot,\cdot)\in C(\R^d\times\R^d;\R^d)$ for all $t\in [0,(n+1)\tau]$,
    \item\label{ass:A2} $f(\cdot,x,z)$ is Borel measurable for all $(x,z)\in \R^d\times\R^d$,
    \item\label{ass:A3}
    there exists $K:[0,(n+1)\tau]\to [0,+\infty)$ such that $K\in L^1([0,(n+1)\tau])$ and for all $(t,x,z)\in [0,(n+1)\tau]\times\R^d\times\R^d$
    \begin{equation}
     \label{assumpt_a3}
        \|f(t,x,z)\|\leq K(t) (1+\|x\|)(1+\|z\|),
    \end{equation}
    \item\label{ass:A4}
    for every compact set $U\subset\R^d$ there exists $L_U:[0,(n+1)\tau]\mapsto [0,+\infty)$ such that $L_U\in L^1([0,(n+1)\tau])$ and for all $t\in [0,(n+1)\tau]$, $x_1,x_2\in U$, $z\in\R^d$
    \begin{equation}
    \label{assumpt_a4}
        \|f(t,x_1,z)-f(t,x_2,z)\|\leq L_U(t) (1+\|z\|)\|x_1-x_2\|.
    \end{equation}
\end{enumerate}
In Section 3,  under the assumptions \ref{ass:A1}-\ref{ass:A4}, we investigate existence and uniqueness of solution for \eqref{eq:DiscDDE2}. Next, in Section 4 we investigate error of the {\it randomized Euler scheme} under slightly stronger assumptions. Namely, we impose global Lipschitz assumption on $f=(t,x,z)$ with respect to $(x,z)$  instead of its local version \ref{ass:A4}.

The mentioned above  randomized Euler scheme is defined as follows. Fix the {\it discretization parameter} $N\in\N$ and set
\begin{displaymath}
	t_k^j=j\tau+kh, \quad k=0,1,\ldots,N, \ j=0,1,\ldots,n,
\end{displaymath} 
 where
\begin{equation}
	h=\frac{\tau}{N}.
\end{equation}
Note that for each $j$ the sequence $\{t^j_k\}_{k=0}^N$ provides uniform discretization of the subinterval $[j\tau,(j+1)\tau]$.
Let $\{\gamma_k^j\}_{j\in\mathbb{N}_0,k\in\mathbb{N}}$ be an iid sequence of random variables, defined on the complete probability space $(\Omega,\Sigma,\mathbb{P})$, where every $\gamma_k^j$ is uniformly distributed on $[0,1]$. We set $y_0^{-1}=\ldots=y^{-1}_N=x_0$ and then  for $j=0,1,\ldots,n$, $k=0,1,\ldots,N-1$ we take
\begin{eqnarray}
\label{expl_euler_1}
	&&y_0^j=y^{j-1}_N,\\
\label{expl_euler_11}	
	&&y_{k+1}^j= y_k^j+h\cdot f(\theta_{k+1}^j,y_k^j,y_k^{j-1}),
\end{eqnarray}
where $\theta_{k+1}^j=t_k^j+h\gamma_{k+1}^j$. As the output we obtain the  sequence of $\mathbb{R}^d$-valued random vectors  $\{y_k^j\}_{k=0,1,\ldots,N}, j=0,1,\ldots,n$ that provides a discrete approximation of the values $\{x(t_k^{j})\}_{k=0,1,\ldots,N}, j=0,1,\ldots,n$. It is easy to see that each random vector $y_k^j$, $j=0,1,\ldots,n$, $k=0,1,\ldots,N$, is measurable with respect to the $\sigma$-field generated by the following family of independent random variables
\begin{equation}
\label{sig_filed1}
\Bigl\{\theta_1^0\ldots,\theta_N^0,\ldots,\theta_1^{j-1},\ldots,\theta_N^{j-1},\theta_1^j,\ldots,\theta_k^j\Bigr\}.
\end{equation}

As the horizon parameter $n$ is fixed, the randomized Euler scheme uses $O(N)$ evaluations of $f$ (with a constant
in the '$O$' notation that only depends on $n$ but not on $N$).

In Section 4 we provide upper bounds on the error
\begin{equation}\label{eq:L2err}
    \Bigl\|\max\limits_{0\leq i\leq N}\|x(t_i^j)-y_i^j\|\Bigl\|_{L^p(\Omega)}
\end{equation}
for $j=0,1,\ldots,n$.
\section{Properties of solutions to  Carath\'eodory DDEs}
In this section we investigate the issue of existence and uniqueness of the solution of \eqref{eq:DiscDDE2} under the assumptions \ref{ass:A1}-\ref{ass:A4}. 

In the sequel we use the following equivalent representation of the solution of \eqref{eq:DiscDDE2}, that is very convenient when proving its properties and when estimating the error of the randomized Euler scheme. For $j=0,1,\ldots,n$ and $t\in [0,\tau]$ it holds
\begin{equation}
    x'(t+j\tau)=f(t+j\tau,x(t+j\tau),x(t+(j-1)\tau)).
\end{equation}
Hence, we take $\phi_{-1}(t):=x_0$,  $\phi_j(t)\udef x(t+j\tau)$ and for $j=0,1,\ldots,n$ we consider the following sequence of initial-value problems
\begin{equation}
\label{eqODE_j0}
\begin{cases}
\phi_j'(t)=g_j(t,\phi_j(t)), & t\in [0,\tau], \\
\phi_j(0)=\phi_{j-1}(\tau), 
\end{cases}
\end{equation}
with $g_j(t,x)=f(t+j\tau,x,\phi_{j-1}(t))$, $(t,x)\in [0,\tau]\times\mathbb{R}^d$. Then the solution of \eqref{eq:DiscDDE2} can be written as
  \begin{equation}
        x(t)=\sum\limits_{j=-1}^n \phi_j(t-j\tau)\cdot\mathbf{1}_{[j\tau,(j+1)\tau]}(t), \quad t\in [-\tau,(n+1)\tau].
    \end{equation}

We prove the following result about existence, uniqueness and H\"older regularity of the solution of the delay differential equation \eqref{eq:DiscDDE2}. We will use this theorem in the next section when proving error estimate for the randomized Euler algorithm. 
\begin{thm}
\label{sol_dde_prop_1}
    Let $n\in\N\cup\{0\}$, $\tau\in (0,+\infty)$, $x_0\in\R^d$ and let $f$ satisfy assumptions \ref{ass:A1}-\ref{ass:A4}. Then there exists a unique absolutely continuous solution $x=x(x_0,f)$ to \eqref{eq:DiscDDE2}
    such that for $j=0,1,\ldots,n$ we have
    \[
    \label{upper_est_Kj}
        \sup\limits_{0\leq t \leq \tau}\|\phi_j(t)\|\leq K_j
    \]
    where  $K_{-1}:=\|x_0\|$ and
    \begin{equation}
    \label{def_K_j}
        K_j=(1+K_{j-1})(1+\|K\|_{L^1([j\tau,(j+1)\tau])})\cdot\exp\Bigl((1+K_{j-1})\|K\|_{L^1([j\tau,(j+1)\tau])}\Bigr).
    \end{equation}
    Moreover, if we additionally assume that for some $p\in (1,+\infty]$ the function $K$ in \ref{ass:A3} satisfies
    \begin{enumerate}[label=\textbf{(A5)},ref=(A5)]
    \item\label{ass:A5} 
    \hspace{4cm}$K\in L^p([0,(n+1)\tau])$,
    \end{enumerate}
    then for all $j=0,1,\ldots,n$, $t,s\in [0,\tau]$ it holds
    \begin{equation}
    \label{phi_j_Lipschitz}
        \|\phi_j(t)-\phi_j(s)\|\leq (1+K_{j-1})(1+K_{j}) \|K\|_{L^p([j\tau,(j+1)\tau])}|t-s|^{1-\frac{1}{p}}.
    \end{equation}
\end{thm}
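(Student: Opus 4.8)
The plan is to proceed by induction on $j\in\{0,1,\dots,n\}$, solving the delay equation interval-by-interval via the reduction to the sequence of initial-value problems \eqref{eqODE_j0}. The inductive hypothesis at stage $j$ is that $\phi_{j-1}$ exists, is absolutely continuous on $[0,\tau]$, and satisfies the uniform bound $\sup_{0\le t\le\tau}\|\phi_{j-1}(t)\|\le K_{j-1}$; for $j=0$ this holds trivially since $\phi_{-1}(t)\equiv x_0$ and $K_{-1}=\|x_0\|$. The global solution is then reassembled from the pieces through the representation $x(t)=\sum_{j=-1}^{n}\phi_j(t-j\tau)\mathbf{1}_{[j\tau,(j+1)\tau]}(t)$, the matching conditions $\phi_j(0)=\phi_{j-1}(\tau)$ guaranteeing absolute continuity across the nodes $j\tau$.

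First I would verify that, given such a $\phi_{j-1}$, the field $g_j(t,x)=f(t+j\tau,x,\phi_{j-1}(t))$ satisfies the Carath\'eodory hypotheses on $[0,\tau]\times\R^d$ needed to invoke the existence and uniqueness result for Carath\'eodory ODEs proved in the Appendix. Continuity of $g_j(t,\cdot)$ is immediate from \ref{ass:A1}; measurability of $t\mapsto g_j(t,x)$ follows from \ref{ass:A2} together with continuity of $\phi_{j-1}$, since a Carath\'eodory function composed with a continuous map is measurable. The linear-growth bound comes from \ref{ass:A3} and the inductive bound, giving $\|g_j(t,x)\|\le \tilde K_j(t)(1+\|x\|)$ with $\tilde K_j(t)=(1+K_{j-1})K(t+j\tau)\in L^1([0,\tau])$, while \ref{ass:A4} yields a local Lipschitz condition in $x$ with integrable constant $(1+K_{j-1})L_U(\cdot+j\tau)$. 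The Appendix theorem then furnishes a unique absolutely continuous solution $\phi_j$ of \eqref{eqODE_j0} on $[0,\tau]$.

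Next I would derive the a priori bound $K_j$. Writing $\phi_j$ in integral form and inserting the growth bound gives $\|\phi_j(t)\|\le K_{j-1}+\int_0^t\tilde K_j(s)(1+\|\phi_j(s)\|)\,ds$; applying the linear-growth form of Gr\"onwall's inequality and evaluating at $t=\tau$, where $\int_0^\tau\tilde K_j=(1+K_{j-1})\|K\|_{L^1([j\tau,(j+1)\tau])}$, produces exactly \eqref{def_K_j} after the elementary estimate $K_{j-1}+(1+K_{j-1})\|K\|_{L^1}\le(1+K_{j-1})(1+\|K\|_{L^1})$. This closes the induction and simultaneously establishes existence, uniqueness, and the uniform bound $\sup_{0\le t\le\tau}\|\phi_j(t)\|\le K_j$. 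Note that this a priori sup bound is what confines each trajectory to a fixed closed ball, a compact $U$ on which \ref{ass:A4} supplies a genuine integrable Lipschitz constant, so that local uniqueness indeed extends to the whole interval $[0,\tau]$.

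Finally, for the H\"older estimate under the additional integrability \ref{ass:A5}, I would again use the integral representation: for $s<t$ in $[0,\tau]$, $\|\phi_j(t)-\phi_j(s)\|\le\int_s^t\|g_j(r,\phi_j(r))\|\,dr\le(1+K_{j-1})(1+K_j)\int_s^t K(r+j\tau)\,dr$, having bounded $1+\|\phi_j(r)\|\le 1+K_j$ via the sup bound just proven. An application of H\"older's inequality with exponents $p$ and $p/(p-1)$ to the last integral gives $\int_s^t K(r+j\tau)\,dr\le\|K\|_{L^p([j\tau,(j+1)\tau])}|t-s|^{1-1/p}$, which is precisely \eqref{phi_j_Lipschitz}. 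The hard part will be none of these computations, which are routine, but rather the careful verification at each inductive step that $g_j$ inherits the full set of Carath\'eodory hypotheses from \ref{ass:A1}--\ref{ass:A4} and the regularity of $\phi_{j-1}$; once that is in place, the Gr\"onwall and H\"older estimates follow mechanically.
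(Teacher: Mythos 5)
Your proposal is correct and follows essentially the same route as the paper: induction over the intervals $[j\tau,(j+1)\tau]$, reduction to the Carath\'eodory ODEs \eqref{eqODE_j0}, verification of \ref{ass:G1}--\ref{ass:G4} for $g_j(t,x)=f(t+j\tau,x,\phi_{j-1}(t))$ using the inductive sup bound on $\phi_{j-1}$, and an appeal to Lemma \ref{lem_ode_1}. The only cosmetic difference is that you re-derive the Gr\"onwall bound \eqref{def_K_j} and the H\"older estimate \eqref{phi_j_Lipschitz} inline, whereas the paper reads them off directly from the quantitative conclusions \eqref{est_sol_z} and \eqref{lip_sol_z} of Lemma \ref{lem_ode_1}; the computations are identical.
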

\begin{proof} We proceed by induction.    We start with the case when $j=0$ and consider the following initial-value problem
\begin{equation}
\label{eqODE_0}
    \begin{cases}
        \phi_0'(t)=g_0(t,\phi_0(t)), & t\in [0,\tau], \\
        \phi_0(0)=x_{0}, 
    \end{cases}
\end{equation}
with $g_0(t,x)=f(t,x,\phi_{-1}(t))=f(t,x,x_0)$. Of course for all $t\in [0,\tau]$ the function $g_0(t,\cdot)$ is continuous and for all $x\in\R^d$ the function $g_0(\cdot,x)$ is Borel measurable. Moreover, by \eqref{assumpt_a3} we have $\|g_0(t,x)\|\leq K(t)(1+\|x_0\|)(1+\|x\|)$ for all $(t,x)\in [a,b]\times\R^d$, and by \eqref{assumpt_a4} for every compact set $U$ in $\R^d$ there exists a positive function $L_U\in L^1([0,(n+1)\tau])$ such that for all $t\in [0,\tau]$, $x,y\in U$ it holds
\begin{equation}
    \|g_0(t,x)-g_0(t,y)\|\leq L_U(t) (1+\|x_0\|) \|x-y\|.
\end{equation}
Therefore, by Lemma \ref{lem_ode_1} we have that there exists a unique absolutely continuous solution $\phi_0:[0,\tau]\to\R^d$ of \eqref{eqODE_0} that satisfies \eqref{upper_est_Kj} with $j=0$. In addition, if $K\in L^p([0,(n+1)\tau])$ for some $p\in (1,+\infty]$ then by Lemma \ref{lem_ode_1} we obtain that $\phi_0$ satisfies \eqref{phi_j_Lipschitz} for $j=0$. 

Let us now assume that for some $j\in \{0,1,\ldots,n-1\}$ there exists a unique absolutely continuous solution $\phi_j:[0,\tau]\to\R^d$ of 
\begin{equation}
\label{eqODE_j}
\begin{cases}
\phi_j'(t)=g_j(t,\phi_j(t)), & t\in [0,\tau], \\
\phi_j(0)=\phi_{j-1}(\tau), 
\end{cases}
\end{equation}
where $g_j(t,x)=f(t+j\tau,x,\phi_{j-1}(t))$, and that satisfies \eqref{upper_est_Kj} with \eqref{phi_j_Lipschitz}, if $K\in L^p([0,(n+1)\tau])$ for some $p\in (1,+\infty]$. We consider the following initial-value problem
\begin{equation}
\label{eqODE_j_1}
\begin{cases}
\phi_{j+1}'(t)=g_{j+1}(t,\phi_{j+1}(t)), & t\in [0,\tau], \\
\phi_{j+1}(0)=\phi_j({\tau}), 
\end{cases}
\end{equation}
with $g_{j+1}(t,x)=f(t+(j+1)\tau,x,\phi_j(t))$. Since $\phi_j$ is continuous on $[0,\tau]$, it is straightforward to see that for all $t\in [0,\tau]$ the function $g_{j+1}(t,\cdot)$ is continuous and for all $x\in\R^d$ the function $g_{j+1}(\cdot,x)$ is Borel measurable. Moreover, by \eqref{upper_est_Kj} for all $(t,x)\in [0,\tau]\times\R^d$
\begin{eqnarray}
    &&\|g_{j+1}(t,x)\|\leq K(t+(j+1)\tau) (1+\|\phi_j(t)\|)(1+\|x\|)\notag\\
    &&\leq K(t+(j+1)\tau) (1+K_j)(1+\|x\|),
\end{eqnarray}
where
\begin{equation}
    \int\limits_0^{\tau} K(t+(j+1)\tau)\,\de t=\int\limits_{(j+1)\tau}^{(j+2)\tau}K(t)\,\de t\leq\int\limits_{0}^{(n+1)\tau}K(t)\,\de t<+\infty.
\end{equation}
Furthermore, by \eqref{assumpt_a4} and \eqref{upper_est_Kj} for every compact set $U$ in $\R^d$ there exists a positive function $L_U\in L^1([0,(n+1)\tau])$ such that for all $t\in [0,\tau]$, $x,y\in U$ it holds
\begin{equation}
    \|g_{j+1}(t,x)-g_{j+1}(t,y)\|\leq L_U(t+(j+1)\tau)(1+K_j)\|x-y\|,
\end{equation}
where
\begin{equation}
    \int\limits_0^{\tau} L_U(t+(j+1)\tau)\,\de t=\int\limits_{(j+1)\tau}^{(j+2)\tau}L_U(t)\,\de t\leq\int\limits_{0}^{(n+1)\tau}L_U(t)\,\de t<+\infty.
\end{equation}
Hence, by Lemma \ref{lem_ode_1} there exists a unique absolutely continuous solution $\phi_{j+1}:[0,\tau]\to\R^d$ of \eqref{eqODE_j_1}. By the inductive assumption and Lemma \ref{lem_ode_1} we get that
\begin{eqnarray}
&&\sup\limits_{0\leq t\leq \tau}\|\phi_{j+1}(t)\|\leq \Bigl(\|\phi_j(\tau)\|+(1+K_j)\int\limits_0^{\tau}K(t+(j+1)\tau)\,\de t\Bigr)\notag\\
&&\times\exp\Bigl((1+K_j)\int\limits_0^{\tau}K(t+(j+1)\tau)\,\de t\Bigr)\notag\\
&&\leq (1+K_j)(1+\|K\|_{L^1([(j+1)\tau,(j+2)\tau])})\exp\Bigl((1+K_j)\|K\|_{L^1([(j+1)\tau,(j+2)\tau])}\Bigr)=K_{j+1},\notag
\end{eqnarray}
and
\begin{equation}
    \|\phi_{j+1}(t)-\phi_{j+1}(s)\|\leq \bar K_{j+1} |t-s|^{1-\frac{1}{p}},
\end{equation}
where
\begin{eqnarray}
&&\bar K_{j+1}=(1+K_j)\Bigl(\int\limits_0^{\tau}|K(t+(j+1)\tau)|^p\,\de t\Bigr)^{1/p}\notag\\
&&\times\Biggl(1+\Bigl(\|\phi_j(\tau)\|+(1+K_j)\int\limits_0^{\tau} K(t+(j+1)\tau)\,\de t\Bigr)\exp\Bigl((1+K_j)\int\limits_0^{\tau} K(t+(j+1)\tau) \,\de t\Bigr)\Biggr)\notag\\
&&\leq (1+K_j)\|K\|_{L^p([(j+1)\tau,(j+2)\tau])}(1+K_{j+1}).\notag
\end{eqnarray}
This ends the inductive proof.
\end{proof}
\begin{rem}
  Theorem \ref{sol_dde_prop_1} can be applied, for example, to the function
  \begin{equation}
      f(t,x,z)=K(t) \cdot \cos(x^2) \cdot |z|^{\alpha}, \quad (t,x,z)\in [0,(n+1)\tau]\times\mathbb{R}\times\mathbb{R},
  \end{equation}
  where $K$ is any function from $L^1([0,(n+1)\tau])$ and $\alpha\in (0,1]$.
\end{rem}
\section{Error of the randomized  Euler scheme}

In this section we perform detailed  error analysis for the randomized Euler. As mentioned in Section 1, for the error analysis we impose global Lipschitz assumption on $f=f(t,x,z)$ with respect to $x$ together with global H\"older condition with respect to $z$. Namely, instead of \ref{ass:A3} and \ref{ass:A4}, we assume 
\begin{enumerate}[label=\textbf{(A3')},ref=(A3')]
    \item\label{ass:A3'}
    there exist $p\in [2,+\infty]$, $\alpha\in (0,1]$ and  $\bar K,L:[0,(n+1)\tau]\to [0,+\infty)$ such that $\bar K,L\in L^p([0,(n+1)\tau])$ and for all $t\in [0,(n+1)\tau]$
    \begin{equation}
        \|f(t,0,0)\|\leq \bar K(t),
    \end{equation}
     and for all $t\in [0,(n+1)\tau]$, $x_1,x_2,z_1,z_2\in\R^d$
    \begin{equation}
        \|f(t,x_1,z_1)-f(t,x_2,z_2)\|\leq L(t)\Bigl( \|x_1-x_2\|+\|z_1-z_2\|^{\alpha}\Bigr).
    \end{equation}
\end{enumerate}
\begin{rem}
\label{rem_str_assum}
Note that the assumptions \ref{ass:A1}, \ref{ass:A2}, \ref{ass:A3'} are stronger than the assumptions \ref{ass:A1}-\ref{ass:A4}. To see that note that if $f$ satisfies \ref{ass:A1}, \ref{ass:A2}, \ref{ass:A3'}  then we get  for all $t\in [0,(n+1)\tau]$ and $x,x_1,x_2,z\in\mathbb{R}^d$ that
\begin{equation}
    \|f(t,x,z)\|\leq (\bar  K(t)+L(t))(1+\|x\|)(1+\|z\|),
\end{equation}
and
\begin{equation}
    \|f(t,x_1,z)-f(t,x_2,z)\|\leq L(t)\|x_1-x_2\|,
\end{equation}
since $1+\|x\|+\|z\|\leq (1+\|x\|)(1+\|z\|)$ and $\|z\|^{\alpha}\leq 1+\|z\|$ for all $x,z\in\mathbb{R}^d$.  Hence, the assumptions \ref{ass:A1}-\ref{ass:A4} are satisfied with $K=\bar K+L\in L^p([0,(n+1)\tau])$, $L_U=L\in L^p([0,(n+1)\tau])$ for any compact set $U\subset\mathbb{R}^d$, and under the assumptions  \ref{ass:A1}, \ref{ass:A2}, \ref{ass:A3'} the thesis of Theorem \ref{sol_dde_prop_1} holds.
\end{rem}
The main result of this section is as follows.
\begin{thm} 
\label{rate_of_conv_expl_Eul} 
Let $n\in\N\cup\{0\}$, $\tau\in (0,+\infty)$, $x_0\in\R^d$, and let $f$ satisfy the assumptions  \ref{ass:A1}, \ref{ass:A2}, \ref{ass:A3'} for some $p\in [2,+\infty)$ and $\alpha\in (0,1]$. There exist $C_0,C_1,\ldots,C_n\in (0,+\infty)$ such that for  all $N\geq \lceil \tau\rceil$ and $j=0,1,\ldots,n$ it holds
	\begin{equation}
	\label{error_main_thm}
		\Bigl\|\max\limits_{0\leq i\leq N}\|x(t_i^j)-y_i^j\|\Bigl\|_{L^p(\Omega)}\leq C_j h^{\frac{1}{2}\alpha^j}.
	\end{equation}
In particular, if $\alpha=1$ then for $j=0,1,\ldots,n$
	\begin{equation}\label{error_main_thm_alpha1}
		\Bigl\|\max\limits_{0\leq i\leq N}\|x(t_i^j)-y_i^j\|\Bigl\|_{L^p(\Omega)}\leq C_j h^{1/2}.
	\end{equation}
\end{thm}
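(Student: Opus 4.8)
The plan is to argue by induction on the interval index $j$, exploiting the representation of the exact solution through the auxiliary initial-value problems \eqref{eqODE_j0} and the matching interval-by-interval structure of the scheme \eqref{expl_euler_1}--\eqref{expl_euler_11}. For the base case $j=0$, both the exact delayed argument $x(t-\tau)$ and the numerical one $y_k^{-1}$ equal the constant $x_0$ on $[0,\tau]$; hence $\phi_0$ solves the ODE $\phi_0'(s)=f(s,\phi_0(s),x_0)$ and $\{y_k^0\}$ is precisely the randomized Euler approximation of it with exact initial value $y_0^0=\phi_0(0)$ and coefficient $a_0(s,u)=f(s,u,x_0)$. By Remark \ref{rem_str_assum} this coefficient is Lipschitz in $u$ with $L^p$ Lipschitz constant and of $L^p$ growth, so the randomized Euler estimate for Carath\'eodory ODEs from \cite{RKYW2017} yields $\bigl\|\max_i\|x(t_i^0)-y_i^0\|\bigr\|_{L^p}\le C_0h^{\min(1/2,1-1/p)}=C_0h^{1/2}$, which is \eqref{error_main_thm} at $j=0$ since $p\ge2$.

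For the inductive step, assume \eqref{error_main_thm} at level $j$. The essential difficulty at level $j+1$ is that the scheme feeds the delayed slot with its own past values $y_k^j$ sampled at the node $kh$, whereas the exact subsystem \eqref{eqODE_j_1} uses the true $\phi_j$. To decouple these effects I would introduce the auxiliary (non-implementable) scheme
\[
\bar y_{k+1}=\bar y_k+h\,f\bigl(\theta_{k+1}^{j+1},\bar y_k,\phi_j(\xi_{k+1})\bigr),\qquad \bar y_0=\phi_{j+1}(0),\quad \xi_{k+1}=\theta_{k+1}^{j+1}-(j+1)\tau,
\]
which is exactly the randomized Euler scheme for \eqref{eqODE_j_1} with the deterministic coefficient $a(s,u)=f(s+(j+1)\tau,u,\phi_j(s))$, driven by the same randomness $\gamma^{j+1}$. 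Then I split $x(t_i^{j+1})-y_i^{j+1}=\bigl(\phi_{j+1}(ih)-\bar y_i\bigr)+\bigl(\bar y_i-y_i^{j+1}\bigr)$. For the first term, $a$ is Lipschitz in $u$ of $L^p$ growth (using $\sup\|\phi_j\|\le K_j$ from Theorem \ref{sol_dde_prop_1}), so \cite{RKYW2017} gives $\bigl\|\max_i\|\phi_{j+1}(ih)-\bar y_i\|\bigr\|_{L^p}\le Ch^{1/2}\le Ch^{\frac12\alpha^{j+1}}$.

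For the second term, subtracting the recursions and applying \ref{ass:A3'} gives, with $e_k=\bar y_k-y_k^{j+1}$,
\[
\|e_{k+1}\|\le\|e_k\|\bigl(1+hL(\theta_{k+1}^{j+1})\bigr)+hL(\theta_{k+1}^{j+1})\bigl(\|\phi_j(\xi_{k+1})-\phi_j(kh)\|+\|x(t_k^j)-y_k^j\|\bigr)^{\alpha}.
\]
By subadditivity of $t\mapsto t^{\alpha}$, the H\"older regularity \eqref{phi_j_Lipschitz} of $\phi_j$ (valid since $K\in L^p$ by Remark \ref{rem_str_assum}) controls the first part by $Ch^{\alpha(1-1/p)}$, while the inductive estimate bounds the $L^p$ norm of $\bigl(\max_k\|x(t_k^j)-y_k^j\|\bigr)^{\alpha}$ by $(C_jh^{\frac12\alpha^j})^{\alpha}=C_j^{\alpha}h^{\frac12\alpha^{j+1}}$. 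Since $p\ge2$ and $\alpha\le1$ one checks $\alpha(1-1/p)\ge\frac12\alpha^{j+1}$, so both contributions are of order $h^{\frac12\alpha^{j+1}}$; collect them into an $\mathcal F_j$-measurable bound $D$ with $\|D\|_{L^p}=O(h^{\frac12\alpha^{j+1}})$.

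The main obstacle is promoting these pointwise bounds to an $L^p(\Omega)$ bound on $\max_i\|e_i\|$, because the only control on the Lipschitz weights is $L\in L^p$ with the \emph{same} $p$ as the probabilistic norm: the naive exponential Gronwall factor $\prod_k(1+hL(\theta_{k+1}^{j+1}))$ would exhaust all the integrability of $L$ and could not be separated from the remaining factors by H\"older's inequality. I would resolve this by iterating the recursion directly in $L^p(\Omega)$ for the running maximum $M_l=\max_{0\le i\le l}\|e_i\|$: since the fresh randomization $\gamma_{l+1}^{j+1}$ is independent of $M_l$ and of the $\mathcal F_j$-measurable quantity $D$, the products factor exactly, giving $\|M_l(1+hL(\theta_{l+1}^{j+1}))\|_{L^p}=\|M_l\|_{L^p}\,\|1+hL(\theta_{l+1}^{j+1})\|_{L^p}$ and hence a scalar recursion $\tilde A_{l+1}\le\tilde A_l(1+b_l)+c_l$ whose Gronwall constant $\exp(\sum_l b_l)$ stays bounded because $\sum_l b_l\le C_p(\|L\|_{L^1}+h^{p-1}\|L\|_{L^p}^p)$. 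Feeding in $\tilde A_0=\|x(t_N^j)-y_N^j\|_{L^p}\le C_jh^{\frac12\alpha^j}$ and $\sum_l c_l\le\tau^{1-1/p}\|L\|_{L^p}\,\|D\|_{L^p}$ yields $\bigl\|\max_i\|e_i\|\bigr\|_{L^p}\le Ch^{\frac12\alpha^{j+1}}$, and the triangle inequality closes the induction. The case $\alpha=1$ in \eqref{error_main_thm_alpha1} is then immediate since $\alpha^j\equiv1$.
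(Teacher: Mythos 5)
Your proposal is correct, and its overall architecture coincides with the paper's: induction over the intervals, an auxiliary non-implementable randomized Euler scheme whose delayed slot is fed with the exact $\phi_j$ evaluated at the randomized points, a two-term error decomposition, Jensen's inequality to convert the inductive bound into a bound on its $\alpha$-power (exactly where the rate degrades from $\tfrac12\alpha^j$ to $\tfrac12\alpha^{j+1}$), the H\"older regularity \eqref{phi_j_Lipschitz} to split the delayed error so that the fresh randomization decouples from an $\mathcal{F}$-measurable quantity, and independence plus a Gronwall argument with $L^p$ weights. The one genuine structural difference is the initialization of the auxiliary scheme: you start it at the exact value $\bar y_0=\phi_{j+1}(0)$, whereas the paper starts it at the numerical value $\bar y_0^{l+1}=y_N^l$ as in \eqref{aux_euler_l1}. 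This swaps which term absorbs the inherited error $\phi_l(\tau)-y_N^l$: in your split, the term $\phi_{j+1}-\bar y$ has exact initial data, so the Carath\'eodory ODE estimate of \cite{RKYW2017} applies verbatim as a black box, and the inherited error enters the coupling term $\bar y-y^{j+1}$ as a nonzero starting value $\tilde A_0$ of the Gronwall recursion; in the paper, the coupling term starts at zero, but the term $\phi_{l+1}-\bar y^{l+1}$ must then be re-derived from scratch (the $S^k_{1,l+1}$, $S^k_{2,l+1}$, $S^k_{3,l+1}$ decomposition followed by Gronwall), precisely because the cited ODE result is not stated for inexact initial values. Your arrangement is therefore slightly more economical in what must be proven by hand. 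A further, essentially cosmetic, difference: you run Gronwall on the norms $\|M_k\|_{L^p(\Omega)}$ via the triangle inequality and independence factorization, while the paper runs the weighted Gronwall lemma of \cite{RKYW2017} on the moments $\mathbb{E}\bigl[\max_{0\leq i\leq k}\|\cdot\|^p\bigr]$ via H\"older's inequality for sums; both hinge on the same fact that $\theta_{k+1}^{j+1}$ is independent of the running maximum and of the quantities measurable with respect to the $\sigma$-field \eqref{sig_filed1} of earlier intervals, and both produce an $N$-independent Gronwall factor controlled by $\|L\|_{L^p}$, which is the crucial point you correctly identified as the obstacle to a naive pathwise Gronwall bound.
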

\begin{proof} In the proof  we use the following auxiliary notation: $\alpha_{k}^{j}=k+\gamma_{k+1}^j$ and $\delta_{k+1}^j=h\cdot \alpha_{k}^{j}$. Then $\delta_{k+1}^j$ is uniformly distributed in $(t_k^0,t_{k+1}^0)$ and $\theta_{k+1}^j=\delta_{k+1}^j+j\tau$ is uniformly distributed in $(t_k^j,t_{k+1}^j)$.

    We start with $j=0$ and consider the initial-vale problem \eqref{eqODE_0}. We define the auxiliary randomized Euler scheme as 
    \begin{eqnarray}
        \label{aux_euler_1}
	        &&\bar y_0^0= y^0_0=y^{-1}_N=x_0,\\
        \label{aux_euler_11}	
	        &&\bar y_{k+1}^0=\bar y_k^0+h\cdot g_0(\theta_{k+1}^0,\bar y_k^0), \ k=0,1,\ldots, N-1.
\end{eqnarray}
Since $\bar y_0^0=y_0^0=x_0$ and $g_0(t,x)=f(t,x,x_0)$, for all $k=0,\ldots,N$ we have that $\bar y_k^0=y_k^0$. Moreover, by (A1), (A2) and (A3') we have that $g_0$ is Borel measurable, and for all $t\in [0,\tau]$ and  $x,y\in\R^d$ 
    \begin{eqnarray}
        &&\|g_0(t,x)\|\leq K(t)(1+\|x_0\|)(1+\|x\|),\notag\\
        &&\|g_0(t,x)-g_0(t,y)\|\leq L(t)\|x-y\|,
    \end{eqnarray}
    where $K=\bar K+L$, as stated in Remark \ref{rem_str_assum}.
    Hence, by Theorem \ref{sol_dde_prop_1} and by using analogous arguments as in the proof of Theorem 4.3 in \cite{RKYW2017} we get
\begin{eqnarray}
   && \Bigl\|\max\limits_{0\leq i\leq N}\|\phi_0(t_i^0)-y_i^0\|\Bigl\|_{L^p(\Omega)}\leq 2^{1-\frac{1}{p}}\exp\Bigl(\frac{(2\tau)^{p-1}}{p}\|L\|^p_{L^p([0,\tau])}\Bigr)\notag\\
   &&\times (1+K_{-1})(1+K_0)\|K\|_{L^p([0,\tau])}\Bigl(2C_p\tau^{\frac{p-2}{2p}}+\tau^{1-\frac{1}{p}}\|L\|_{L^p([0,\tau])}\Bigr)h^{1/2}=C_0h^{1/2},\notag
\end{eqnarray}
where $C_0$ does not depend on $N$. Since $\phi_0(t_i^0)=x(t_i^0)$ we get \eqref{error_main_thm} for $j=0$.

Let us now assume that there exists $l\in\{0,1,\ldots,n-1\}$ for which there exists $C_l \in (0,+\infty)$ such that for all $N\geq \lceil \tau\rceil$
\begin{equation}
\label{ind_assumpt_1}
    \Bigl\|\max\limits_{0\leq i\leq N}\|\phi_l(t_i^0)-y_i^l\|\Bigl\|_{L^p(\Omega)}\leq C_l h^{\frac{1}{2}\alpha^l}.
\end{equation}
We consider the following initial-value problem
\begin{equation}
\label{eqODE_l_1}
\begin{cases}
\phi_{l+1}'(t)=g_{l+1}(t,\phi_{l+1}(t)), & t\in [0,\tau], \\
\phi_{l+1}(0)=\phi_l({\tau}), 
\end{cases}
\end{equation}
with $g_{l+1}(t,x)=f(t+(l+1)\tau,x,\phi_l(t))$. 
Recall that by (A1), (A2), (A3'), Theorem \ref{sol_dde_prop_1}, and  Remark \ref{rem_str_assum}  the function $g_{l+1}$ is Borel measurable and for all $t\in [0,\tau]$, $x,y\in\R^d$ it satisfies
\begin{equation}
  \|g_{l+1}(t,x)\|\leq K(t+(l+1)\tau)(1+K_l)(1+\|x\|),
\end{equation}
\begin{equation}
    \|g_{l+1}(t,x)-g_{l+1}(t,y)\|\leq L(t+(l+1)\tau)\|x-y\|.
\end{equation}
We define the auxiliary randomized Euler scheme as follows
\begin{eqnarray}
        \label{aux_euler_l1}
	       &&\bar y_0^{l+1}= y^{l+1}_0=y^{l}_N,\\
	        &&\bar y_{k+1}^{l+1}=\bar y_k^{l+1}+h\cdot g_{l+1}(\alpha_k^{l+1}\cdot h,\bar y_k^{l+1}), \ k=0,1,\ldots, N-1.
\end{eqnarray}
From the definition it follows that $\bar y_k^j$, $j=0,1,\ldots,n$, $k=0,1,\ldots,N$, is measurable with respect to the $\sigma$-filed generated by \eqref{sig_filed1}, so as $y_k^j$. Moreover,  $\bar y^{l+1}_i$ approximates $\phi_{l+1}$ at $t_i^0$, however $\{\bar y_i^{l+1}\}_{i=0,1,\ldots,N}$ is not implementable. We use $\bar y^{l+1}_i$ only in order to estimate the error \eqref{error_main_thm} of $y^{l+1}_i$, since  it holds
\begin{equation}\label{error_decomp}
\begin{aligned}
\Bigl\|\max\limits_{0\leq i\leq N}\|\phi_{l+1}(t_i^0)-y_i^{l+1}\|\Bigl\|_{L^p(\Omega)} &\leq \Bigl\|\max\limits_{0\leq i\leq N}\|\phi_{l+1}(t_i^0)-\bar y_i^{l+1}\|\Bigl\|_{L^p(\Omega)} \\
&\quad+\Bigl\|\max\limits_{0\leq i\leq N}\|\bar y_i^{l+1}-y_i^{l+1}\|\Bigl\|_{L^p(\Omega)}.
\end{aligned}
\end{equation}
Firstly, we estimate $\displaystyle{\Bigl\|\max\limits_{0\leq i\leq N}\|\bar y_i^{l+1}-y_i^{l+1}\|\Bigl\|_{L^p(\Omega)}}$. For $k\in\{1,\ldots,N\}$ we get
\begin{align*}
\bar y^{l+1}_k-y^{l+1}_k &= \sum\limits_{j=1}^k(\bar y^{l+1}_j-\bar y^{l+1}_{j-1})-\sum\limits_{j=1}^k(y^{l+1}_j-y^{l+1}_{j-1})\notag\\
&=h\sum\limits_{j=1}^k\Bigl(f(\theta_j^{l+1},\bar y^{l+1}_{j-1},\phi_{l}(\alpha_{j-1}^{l+1}\cdot h))-f(\theta_j^{l+1},y^{l+1}_{j-1},y^l_{j-1})\Bigr),
\end{align*}
which gives
\[
\|\bar y^{l+1}_k-y^{l+1}_k\|\leq h\sum\limits_{j=1}^kL(\theta_j^{l+1})\cdot \|\bar y^{l+1}_{j-1}-y^{l+1}_{j-1}\|+h\sum\limits_{j=1}^k L(\theta_j^{l+1})\cdot \|\phi_{l}(\alpha_{j-1}^{l+1}\cdot h)-y^{l}_{j-1}\|^{\alpha}.
\]
Note that the random variables $L(\theta_j^{l+1})$ and $\|\phi_{l}(\alpha_{j-1}^{l+1}\cdot h)-y^{l}_{j-1}\|^{\alpha}$ are not independent. However,  by Theorem \ref{sol_dde_prop_1}  we get
\begin{align*}
\|\phi_{l}(\alpha_{j-1}^{l+1}\cdot h)-y^{l}_{j-1}\| &\leq
\|\phi_{l}(\alpha_{j-1}^{l+1}\cdot h)-\phi_{l}(t_{j-1}^0)\|+\|\phi_{l}(t_{j-1}^0)-y^{l}_{j-1}\| \\
&\leq (1+K_{l-1})(1+K_l)\|K\|_{L^p([l\tau,(l+1)\tau])}\cdot h^{1-\frac{1}{p}}+\|\phi_l(t_{j-1}^0)-y_{j-1}^l\|.
\end{align*}
Therefore for $k\in\{1,\ldots,N\}$
\begin{align*}
\|\bar y^{l+1}_k-y^{l+1}_k\| &\leq h\sum\limits_{j=1}^k L(\theta_j^{l+1})\cdot\max\limits_{0\leq i\leq j-1} \|\bar y^{l+1}_i-y^{l+1}_i\| \\
&\quad +h\Bigl(\sum\limits_{j=1}^k L(\theta_j^{l+1})\Bigr)\cdot\Bigl(c_{l+1}h^{\alpha(1-\frac{1}{p})}+\max\limits_{0\leq i\leq N}\|\phi_{l}(t_i^0)-y_i^l\|^{\alpha}\Bigr).
\end{align*}
Since $\|\bar y^{l+1}_0-y^{l+1}_0\|=0$ and due to the fact that the random variables $\Bigl(\sum\limits_{j=1}^k L(\theta_j^{l+1})\Bigr)$, $\max\limits_{0\leq i\leq N} \|\phi_l(t_i^0)-y^{l}_i\|^{\alpha}$ are independent, we get
\[
\begin{split}
\mathbb{E}\Bigl(\max\limits_{0\leq i\leq k}\|\bar y^{l+1}_i-y^{l+1}_i\|^p\Bigr)\leq c_p h^p\mathbb{E}\Bigl[\sum\limits_{j=1}^k L(\theta_j^{l+1})\cdot\max\limits_{0\leq i\leq j-1} \|\bar y^{l+1}_i-y^{l+1}_i\|\Bigr]^p \\
+\tilde c_p\mathbb{E}\Bigl[h\sum\limits_{j=1}^k L(\theta_j^{l+1})\Bigr]^p\cdot \Bigl(\tilde c_{l+1}h^{\alpha(p-1)}+\mathbb{E}\Bigl[\max\limits_{0\leq i\leq N}\|\phi_{l}(t_i^0)-y_i^l\|^{\alpha p}\Bigr]\Bigr).
\end{split}
\]
By Jensen inequality and \eqref{ind_assumpt_1} we have
\begin{equation}
\label{est_1}
    \mathbb{E}\Bigl[\max\limits_{0\leq i\leq N}\|\phi_{l}(t_i^0)-y_i^l\|^{\alpha p}\Bigr]\leq\Biggl( \mathbb{E}\Bigl[\max\limits_{0\leq i\leq N}\|\phi_{l}(t_i^0)-y_i^l\|^{p}\Bigr]\Biggr)^{\alpha}\leq C_l^ph^{\frac{p}{2}\alpha^l}.
\end{equation}
Since the random variables $L(\theta_j^{l+1})$ and $\max\limits_{0\leq i \leq j-1}\|\bar y_i^{l+1}-y_i^{l+1}\|$ are independent, $\theta_j^{l+1}\sim U(t_{j-1}^{l+1},t_{j}^{l+1})$, we get by the H\"older inequality
\begin{equation}
\label{est_2}
    h^p\mathbb{E}\Bigl[\sum\limits_{j=1}^k L(\theta_j^{l+1})\cdot\max\limits_{0\leq i\leq j-1} \|\bar y^{l+1}_i-y^{l+1}_i\|\Bigr]^p\leq \tau^{p-1}\sum\limits_{j=1}^k\int\limits_{t_{j-1}^{l+1}}^{t_{j}^{l+1}}(L(t))^p \de t\cdot\mathbb{E}\Bigl[\max\limits_{0\leq i\leq j-1}\|\bar y_i^{l+1}-y_i^{l+1}\|^p\Bigr],
\end{equation}
and
\begin{equation}
\label{est_3}
    h^p\cdot\mathbb{E}\Bigl(\sum\limits_{j=1}^k L(\theta_j^{l+1})\Bigr)^p\leq\tau^{p-1}\int\limits_{(l+1)\tau}^{(l+2)\tau}(L(t))^p\,\de t<+\infty.
\end{equation}
Combining \eqref{est_1}, \eqref{est_2},  \eqref{est_3}, and using again the fact that $\bar y_0^{l+1}=y_0^{l+1}$ we arrive at
\begin{displaymath}
    \mathbb{E}\Bigl(\max\limits_{0\leq i\leq k}\|\bar y^{l+1}_i-y^{l+1}_i\|^p\Bigr)\leq
    \bar C_{2,l+1}h^{\frac{p}{2}\alpha^{l+1}}+\bar C_{1,l+1} \sum\limits_{j=1}^{k-1}\int\limits_{t_{j}^{l+1}}^{t_{j+1}^{l+1}}(L(t))^p \de t\cdot\mathbb{E}\Bigl[\max\limits_{0\leq i\leq j}\|\bar y_i^{l+1}-y_i^{l+1}\|^p\Bigr],
\end{displaymath}
for $k\in\{1,2,\ldots,N\}$. By applying weighted Gronwall's lemma (see, for example, Lemma 2.1 in \cite{RKYW2017}) we get that
\begin{equation*}
    \mathbb{E}\Bigl[\max\limits_{0\leq i\leq N}\|\bar y^{l+1}_i-y^{l+1}_i\|^p\Bigr]\leq \bar C_{3,l+1}\|L\|^p_{L^p([(l+1)\tau,(l+2)\tau])}e^{\bar C_{4,l+1}\|L\|^p_{L^p([(l+1)\tau,(l+2)\tau])}}h^{\frac{p}{2}\alpha^{l+1}},
\end{equation*}
and hence
\begin{equation}
\label{est_diff_bykyk}
    \Bigl\|\max\limits_{0\leq i\leq N}\|\bar y^{l+1}_i-y^{l+1}_i\|\Bigl\|_{L^p(\Omega)}\leq \bar C_{5,l+1}h^{\frac{1}{2}\alpha^{l+1}}.
\end{equation}
We now establish an upper bound on  $\displaystyle{\Bigl\|\max\limits_{0\leq i\leq N}\|\phi_{l+1}(t_i^0)-\bar y_i^{l+1}\|\Bigl\|_{L^p(\Omega)}}$. For $k\in\{1,2,\ldots,N\}$ we have
\begin{eqnarray}
        &&\phi_{l+1}(t_k^0)-\bar y^{l+1}_k=\phi_{l+1}(0)-\bar y_0^{l+1}+(\phi_{l+1}(t_k^0)-\phi_{l+1}(t_0^0))+(\bar y^{l+1}_k-\bar y^{l+1}_0)\notag\\
        &&=(\phi_l(t_N^0)-y_N^l)+\sum\limits_{j=1}^k(\phi_{l+1}(t_j^0)-\phi_{l+1}(t_{j-1}^0))-\sum\limits_{j=1}^k(\bar y^{l+1}_j-\bar y^{l+1}_{j-1})\notag\\
        &&=(\phi_l(t_N^0)-y_N^l)+\sum\limits_{j=1}^k\Bigl(\int\limits_{t_{j-1}^0}^{t_{j}^0}g_{l+1}(s,\phi_{l+1}(s))\de s-h\cdot  g_{l+1}(\delta_j^{l+1},\bar y^{l+1}_{j-1})\Bigr)\notag\\
        \label{err_phi_byk_decomp1}
        &&=(\phi_l(t_N^0)-y_N^l)+S^k_{1,l+1}+S^k_{2,l+1}+S^k_{3,l+1},
\end{eqnarray}
where
\begin{align*}
S^k_{1,l+1} &= \sum\limits_{j=1}^k \Bigl(\int\limits_{t_{j-1}^0}^{t_{j}^0}g_{l+1}(s,\phi_{l+1}(s))\de s-h\cdot  g_{l+1}(\delta_j^{l+1},\phi_{l+1}(\delta_j^{l+1}))\Bigr), \\
S^k_{2,l+1} &= h\cdot\sum\limits_{j=1}^k\Bigl(g_{l+1}(\delta_j^{l+1},\phi_{l+1}(\delta_j^{l+1}))-g_{l+1}(\delta_j^{l+1},\phi_{l+1}(t_{j-1}^0))\Bigr), \\
S^k_{3,l+1} &= h\cdot\sum\limits_{j=1}^k\Bigl(g_{l+1}(\delta_j^{l+1},\phi_{l+1}(t_{j-1}^0))-g_{l+1}(\delta_j^{l+1},\bar y^{l+1}_{j-1})\Bigr).
\end{align*}
Since the function $[0,\tau]\ni t\mapsto g_{l+1}(t,\phi_{l+1}(t))$ is Borel measurable and, by Theorem \ref{sol_dde_prop_1} above,
\begin{equation}
    \|g_{l+1}(\cdot,\phi_{l+1}(\cdot))\|_{L^p([0,\tau])}\leq (1+K_l)(1+K_{l+1})\|K\|_{L^p([(l+1)\tau,(l+2)\tau])}<+\infty,
\end{equation}
we get by Theorem 3.1 in \cite{RKYW2017} that
\begin{equation}
\label{est_S1}
    \Bigl\|\max\limits_{1\leq k\leq N}\|S^k_{1,l+1}\|\Bigl\|_{L^p(\Omega)}\leq 2C_p\tau^{\frac{p-2}{2p}} (1+K_l)(1+K_{l+1})\|K\|_{L^p([(l+1)\tau,(l+2)\tau])}\cdot h^{1/2}.
\end{equation}
By Theorem \ref{sol_dde_prop_1} we get
\begin{align*}
        &\|S^k_{2,l+1}\|\leq h\sum\limits_{j=1}^k L(\theta_j^{l+1})\cdot\|\phi_{l+1}(\delta_j^{l+1})-\phi_{l+1}(t_{j-1}^0)\| \\
        &\leq h^{2-\frac{1}{p}}(1+K_l)(1+K_{l+1})\|K\|_{L^p[(l+1)\tau,(l+2)\tau]}\sum\limits_{j=1}^N L(\theta_j^{l+1}), 
\end{align*}
and by the H\"older inequality
\begin{equation}
\label{est_S2}
\begin{aligned}
    \Bigl\|\max\limits_{1\leq k\leq N}\|S^k_{2,l+1}\|\Bigl\|_{L^p(\Omega)}&\leq h^{1-\frac{1}{p}}\tau^{1-\frac{1}{p}}(1+K_l)(1+K_{l+1})\|K\|_{L^p[(l+1)\tau,(l+2)\tau]} \\
    &\times\mathbb{E}\Bigl[h\cdot\sum\limits_{j=1}^N (L(\theta_j^{l+1}))^p\Bigr] \\
    \leq h^{1-\frac{1}{p}}\tau^{1-\frac{1}{p}}&(1+K_l)(1+K_{l+1})\|K\|_{L^p[(l+1)\tau,(l+2)\tau]}\|L\|_{L^p[(l+1)\tau,(l+2)\tau]}.
\end{aligned}
\end{equation}
Moreover,
\begin{equation}
\label{est_S3}
\begin{aligned}
   \|S^k_{3,l+1}\|&\leq h\cdot\sum\limits_{j=1}^kL(\theta^{l+1}_j)\cdot\|\phi_{l+1}(t_{j-1}^0)-\bar y^{l+1}_{j-1}\| \\
   \leq hL(\theta_1^{l+1})\cdot\|\phi_l(t_N^0)-y_N^l\|&+h\cdot\sum\limits_{j=2}^kL(\theta_j^{l+1})\cdot\max\limits_{0\leq i \leq j-1}\|\phi_{l+1}(t_i^0)-\bar y^{l+1}_i\|.
\end{aligned}
\end{equation}
Hence, from \eqref{err_phi_byk_decomp1} and \eqref{est_S3} we have for $k\in\{1,2,\ldots,N\}$
\begin{align*}
    \max\limits_{0\leq i\leq k}\|\phi_{l+1}(t_i^0)-\bar y^{l+1}_i\|&\leq (1+h L(\theta_1^{l+1}))\|\phi_l(t_N^0)-y_N^l\|+\max\limits_{1\leq k\leq N}\|S^k_{1,l+1}\| \\
    +\max\limits_{1\leq k\leq N}\|S^k_{2,l+1}\|&+h\cdot\sum\limits_{j=2}^kL(\theta_j^{l+1})\cdot\max\limits_{0\leq i \leq j-1}\|\phi_{l+1}(t_i^0)-\bar y^{l+1}_i\|.
\end{align*}
Since the random variables $L(\theta_1^{l+1})$ and $\|\phi_l(t_N^0)-y_N^l\|$ are independent, we obtain
\begin{align*}
    &\mathbb{E}\Bigl[\max\limits_{0\leq i\leq k}\|\phi_{l+1}(t_i^0)-\bar y^{l+1}_i\|^p\Bigr]\leq c_p\mathbb{E}(1+h L(\theta_1^{l+1}))^p\cdot\mathbb{E}\|\phi_l(t_N^0)-y_N^l\|^p \\
    &+ c_p\Bigl(\mathbb{E}\Bigl[\max\limits_{1\leq k\leq N}\|S^k_{1,l+1}\|^p\Bigr]+\mathbb{E}\Bigl[\max\limits_{1\leq k\leq N}\|S^k_{2,l+1}\|^p\Bigr]\Bigr) \\
    &+c_p\mathbb{E}\Bigl[h\cdot\sum\limits_{j=2}^kL(\theta_j^{l+1})\cdot\max\limits_{0\leq i \leq j-1}\|\phi_{l+1}(t_i^0)-\bar y^{l+1}_i\|\Bigr]^p.
\end{align*}
Moreover
\begin{displaymath}
    \mathbb{E}\left[1+h L(\theta_1^{l+1})\right]^p\leq c_p\Bigl(1+(b-a)^{p-1}\cdot\|L\|^p_{L^p[(l+1)\tau,(l+2)\tau]}\Bigr)<+\infty,
\end{displaymath}
and by the H\"older inequality, and the fact that the random variables $L(\theta_j^{l+1})$ and $\max\limits_{0\leq i \leq j-1}\|\phi_{l+1}(t_i^0)-\bar y_i^{l+1}\|$ are independent we have
\begin{align*}
    &\mathbb{E}\Bigl[h\cdot\sum\limits_{j=2}^kL(\theta_j^{l+1})\cdot\max\limits_{0\leq i \leq j-1}\|\phi_{l+1}(t_i^0)-\bar y^{l+1}_i\|\Bigr]^p \\
    &\leq h^p\cdot N^{p-1}\cdot\sum\limits_{j=2}^k\mathbb{E}(L(\theta_j^{l+1}))^p\cdot\mathbb{E}\Bigl[\max\limits_{0\leq i \leq j-1}\|\phi_{l+1}(t_i^0)-\bar y^{l+1}_i\|^p\Bigr] \\
    &\leq \tau^{p-1}\sum\limits_{j=1}^{k-1}\int\limits_{t_j^{l+1}}^{t_{j+1}^{l+1}}(L(t))^p \de t\cdot\mathbb{E}\Bigl[\max\limits_{0\leq i \leq j}\|\phi_{l+1}(t_i^0)-\bar y^{l+1}_i\|^p\Bigr].
\end{align*}
Therefore, from  for all $k\in\{1,2,\ldots,N\}$ the following inequality holds
\begin{align*}
\mathbb{E}\Bigl[\max\limits_{0\leq i\leq k}\|\phi_{l+1}(t_i^0)-\bar y^{l+1}_i\|^p\Bigr] &\leq \tilde c_p\Bigl(\mathbb{E}\|\phi_l(t_N^0)-y_N^l\|^p \\
&\quad + \mathbb{E}\Bigl[\max\limits_{1\leq k\leq N}\|S^k_{1,l+1}\|^p\Bigr]+\mathbb{E}\Bigl[\max\limits_{1\leq k\leq N}\|S^k_{2,l+1}\|^p\Bigr]\Bigr) \\
&\quad +c_p\tau^{p-1}\sum\limits_{j=1}^{k-1}\int\limits_{t_j^{l+1}}^{t_{j+1}^{l+1}}(L(t))^p \de t\cdot\mathbb{E}\Bigl[\max\limits_{0\leq i \leq j}\|\phi_{l+1}(t_i^0)-\bar y^{l+1}_i\|^p\Bigr].
\end{align*}
By using  Gronwall's lemma (see,  Lemma 2.1 in \cite{RKYW2017}), \eqref{ind_assumpt_1}, \eqref{est_S1}, and \eqref{est_S2}  we get for all $k\in\{1,2,\ldots,N\}$
\begin{align*}
\mathbb{E}\Bigl[\max\limits_{0\leq i\leq k}\|\phi_{l+1}(t_i^0)-\bar y^{l+1}_i\|^p\Bigr] & \leq \tilde c_p\Bigl(\mathbb{E}\|\phi_l(t_N^0)-y_N^l\|^p\\
+\mathbb{E}\Bigl[\max\limits_{1\leq k\leq N}\|S^k_{1,l+1}\|^p\Bigr]&+\mathbb{E}\Bigl[\max\limits_{1\leq k\leq N}\|S^k_{2,l+1}\|^p\Bigr]\Bigr)\exp\Bigl(c_p\tau^{p-1}\sum\limits_{j=1}^{k-1}\int\limits_{t_j^{l+1}}^{t_{j+1}^{l+1}}(L(t))^p\de t\Bigr) \\
&\leq C_{l+1}\cdot\exp\Bigl((c_p\tau^{p-1}\|L\|_{L^p[(l+1)\tau,(l+2)\tau]}\Bigr)\cdot h^{\frac{p}{2}\alpha^l},
\end{align*}
which gives
\begin{equation}
\label{error_phi_byk2}
    \Bigl\|\max\limits_{0\leq i\leq N}\|\phi_{l+1}(t_i^0)-\bar y_i^{l+1}\|\Bigl\|_{L^p(\Omega)}\leq C_{l+1} h^{\frac{1}{2}\alpha^l}.
\end{equation}
Combining \eqref{error_decomp}, \eqref{est_diff_bykyk}, and \eqref{error_phi_byk2} we finally obtain
\begin{equation}
     \Bigl\|\max\limits_{0\leq i\leq N}\|\phi_{l+1}(t_i^0)- y_i^{l+1}\|\Bigl\|_{L^p(\Omega)}\leq C_{l+1} h^{\frac{1}{2}\alpha^{l+1}},
\end{equation}
which ends the inductive part of the proof. Finally,  $\phi_{l+1}(t_i^0)=\phi_{l+1}(ih)=x(ih+(l+1)\tau)=x(t_i^{l+1})$ and the proof of \eqref{error_main_thm} is finished. 
\end{proof}
From Theorem \ref{rate_of_conv_expl_Eul} we see that in the case when the horizon parameter $n$ is fixed and $\alpha=1$ the randomized Euler scheme recovers the classical optimal  convergence rate for Monte-Carlo methods, since its error is $O(N^{-1/2})$, in the whole time interval $[0,(n+1)\tau]$, and uses $O(N)$ values of $f$, see \cite{novak1}. For $\alpha\in (0,1)$ we see that the upper bound on the error increases from interval to interval, reaching $O(N^{-\frac{1}{2}\alpha^n})$ in the final time point $(n+1)\tau$. This error behavior seems to be specific for DDEs under lack of global Lipschitz assumption with respect to $z$, see, for example, \cite{CZPMPP}.

In the case when $\max\{\|L\|_{L^{\infty}([0,(n+1)\tau])},\|K\|_{L^{\infty}([0,(n+1)\tau])}\}<+\infty$ and $\alpha=1$ we can establish for the randomized Euler scheme convergence with probability one.
    \begin{prop} 
    \label{as_conv}
    Let $n\in\N\cup\{0\}$, $\tau\in (0,+\infty)$, $x_0\in\R^d$, and let $f$ satisfy the assumptions  \ref{ass:A1}, \ref{ass:A2}, \ref{ass:A3'} with $p=+\infty$, $\alpha=1$. Then for all $\varepsilon\in(0,1/2)$ there exists $\tilde \eta_{\varepsilon,n}\in\bigcap\limits_{q\in [2,+\infty)}L^q(\Omega)$ such that
    \begin{equation}
        \max\limits_{0\leq j\leq n}\max\limits_{0\leq i\leq N}\|x(t_i^j)-y_i^j\|\leq\tilde\eta_{\varepsilon,n}\cdot N^{-\frac{1}{2}+\varepsilon} \quad \hbox{almost surely}
    \end{equation}
    for all  $N\geq\lceil \tau\rceil$.
    \end{prop}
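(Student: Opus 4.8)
The plan is to upgrade the $L^p$-error bound of Theorem \ref{rate_of_conv_expl_Eul} to an almost sure statement by constructing a single random variable that dominates the rescaled errors uniformly in $N$, in the spirit of a Borel--Cantelli / Kolmogorov-type argument. The guiding idea is that if the global error, multiplied by $N^{\frac12-\varepsilon}$, has finite moments of every order after taking a supremum over $N$, then it is finite almost surely and the desired pathwise rate follows.

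First I would observe that, since the time interval $[0,(n+1)\tau]$ is bounded, assumption \ref{ass:A3'} with $p=+\infty$ implies $\bar K,L\in L^q([0,(n+1)\tau])$ for every finite $q\in[2,+\infty)$. Hence Theorem \ref{rate_of_conv_expl_Eul} applies with $\alpha=1$ for each such $q$, producing constants $C_0(q),\ldots,C_n(q)\in(0,+\infty)$ with
\[
\Bigl\|\max_{0\leq i\leq N}\|x(t_i^j)-y_i^j\|\Bigr\|_{L^q(\Omega)}\leq C_j(q)\,h^{1/2},\qquad h=\tau/N .
\]
Setting $E_N:=\max_{0\leq j\leq n}\max_{0\leq i\leq N}\|x(t_i^j)-y_i^j\|$ and summing over $j$ via $\|\max_j a_j\|_{L^q}\leq\sum_j\|a_j\|_{L^q}$, I obtain for every $q\in[2,+\infty)$ a constant $\hat C(q)=\tau^{1/2}\sum_{j=0}^n C_j(q)$ such that $\|E_N\|_{L^q(\Omega)}\leq\hat C(q)\,N^{-1/2}$ for all $N\geq\lceil\tau\rceil$.

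Now fix $\varepsilon\in(0,1/2)$ and define $\tilde\eta_{\varepsilon,n}:=\sup_{N\geq\lceil\tau\rceil}N^{\frac12-\varepsilon}E_N$, which is measurable as a countable supremum of measurable functions. To control its moments I dominate the supremum by the corresponding series: for any $q\in[2,+\infty)$,
\[
\mathbb{E}\bigl[\tilde\eta_{\varepsilon,n}^{\,q}\bigr]\leq\sum_{N\geq\lceil\tau\rceil}N^{q(\frac12-\varepsilon)}\,\mathbb{E}\bigl[E_N^{\,q}\bigr]\leq\hat C(q)^q\sum_{N\geq\lceil\tau\rceil}N^{q(\frac12-\varepsilon)}N^{-q/2}=\hat C(q)^q\sum_{N\geq\lceil\tau\rceil}N^{-q\varepsilon}.
\]
The last series converges exactly when $q\varepsilon>1$, i.e.\ $q>1/\varepsilon$, so $\tilde\eta_{\varepsilon,n}\in L^q(\Omega)$ for every $q>1/\varepsilon$. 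Since $(\Omega,\Sigma,\mathbb{P})$ is a probability space, $L^q\subset L^{q'}$ whenever $q'\leq q$; given an arbitrary $q'\in[2,+\infty)$, choosing $q>\max\{q',1/\varepsilon\}$ then yields $\tilde\eta_{\varepsilon,n}\in L^{q'}(\Omega)$, hence $\tilde\eta_{\varepsilon,n}\in\bigcap_{q\in[2,+\infty)}L^q(\Omega)$. In particular $\tilde\eta_{\varepsilon,n}<+\infty$ almost surely.

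The claimed pathwise bound is then immediate from the definition of the supremum: for each fixed $N\geq\lceil\tau\rceil$ one has $N^{\frac12-\varepsilon}E_N\leq\tilde\eta_{\varepsilon,n}$ almost surely, that is $E_N\leq\tilde\eta_{\varepsilon,n}\,N^{-\frac12+\varepsilon}$, which is precisely the asserted inequality. The only delicate point is the interchange of the supremum over $N$ with the moment estimate; I resolve it by bounding $\sup_N$ by $\sum_N$ so that the per-$N$ rates of Theorem \ref{rate_of_conv_expl_Eul} become summable. The main (and rather mild) obstacle is that no single moment order $q$ places $\tilde\eta_{\varepsilon,n}$ in all spaces $L^{q'}$ simultaneously; this is overcome by exploiting the freedom to take $q$ arbitrarily large together with the inclusion $L^q\subset L^{q'}$ valid on a finite measure space.
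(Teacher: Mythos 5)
Your proof is correct, and its overall architecture matches the paper's: both first observe that \ref{ass:A3'} with $p=+\infty$ yields, via Theorem \ref{rate_of_conv_expl_Eul}, the bound $\bigl\|\max_{0\leq i\leq N}\|x(t_i^j)-y_i^j\|\bigr\|_{L^q(\Omega)}\leq C_j(q)\,h^{1/2}$ for \emph{every} finite $q\in[2,+\infty)$, and then convert these all-moments rates into an almost sure rate with a random constant lying in every $L^q$. The difference is in how the conversion is carried out: the paper simply invokes Lemma 2.1 of Kloeden and Neuenkirch \cite{KloNeu2007} on each interval index $j$ separately, obtaining random variables $\eta_{\varepsilon,j}$ and setting $\tilde\eta_{\varepsilon,n}=\max_{0\leq j\leq n}\eta_{\varepsilon,j}$, whereas you aggregate over $j$ first (via $\|\max_j a_j\|_{L^q}\leq\sum_j\|a_j\|_{L^q}$) and then prove the conversion from scratch: you define $\tilde\eta_{\varepsilon,n}=\sup_{N\geq\lceil\tau\rceil}N^{\frac12-\varepsilon}E_N$, dominate the supremum by the series, and use the summability of $\sum_N N^{-q\varepsilon}$ for $q>1/\varepsilon$ together with the inclusion $L^q(\Omega)\subset L^{q'}(\Omega)$ on a probability space. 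In effect you have reproved the cited lemma inline, which makes your argument self-contained and makes explicit why $\tilde\eta_{\varepsilon,n}$ has moments of all orders (a point the paper delegates entirely to the reference); the paper's version is shorter but opaque at exactly this step. One cosmetic remark: the inequality $N^{\frac12-\varepsilon}E_N\leq\tilde\eta_{\varepsilon,n}$ holds pointwise on $\Omega$ by definition of the supremum, not merely almost surely; the ``almost surely'' is only needed for the finiteness of $\tilde\eta_{\varepsilon,n}$, which your moment bound supplies.
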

\begin{proof}
    Note that if $f$ satisfy the assumptions  \ref{ass:A1}, \ref{ass:A2}, \ref{ass:A3'} with $p=+\infty$, $\alpha=1$, then from the proof of Theorem \ref{rate_of_conv_expl_Eul} we get that for all $q\in [2,+\infty)$ there exist $C_0(q),C_1(q),\ldots,C_n(q)\in (0,+\infty)$ such that  for  all $N\geq \lceil \tau\rceil$ and $j=0,1,\ldots,n$ we have
	\begin{equation}
	\label{error_main_thm_alpha1q}
		\Bigl\|\max\limits_{0\leq i\leq N}\|x(t_i^j)-y_i^j\|\Bigl\|_{L^q(\Omega)}\leq C_j(q) h^{1/2}.
	\end{equation}
	Hence, from Lemma 2.1. in \cite{KloNeu2007} we have that for all $\varepsilon\in (0,1/2)$ and  there exist non-negative random variables $\eta_{\varepsilon,0},\eta_{\varepsilon,1},\ldots\eta_{\varepsilon,n}$ such that for all $j=0,1,\ldots,n$
	\begin{equation}
	    \max\limits_{0\leq i\leq N}\|x(t_i^j)-y_i^j\|\leq \eta_{\varepsilon,j}\cdot N^{-\frac{1}{2}+\varepsilon} \quad \hbox{almost surely}
	\end{equation}
	for all $N\geq\lceil \tau\rceil$. This implies the thesis with $\tilde \eta_{\varepsilon,n}=\max\limits_{0\leq j\leq n}\eta_{\varepsilon,j}$.
\end{proof}
\section{Numerical experiments}
In order to illustrate our theoretical findings we perform several numerical experiments. We chose the following exemplary right-hand side functions. The first, which satisfies the assumptions \ref{ass:A1}, \ref{ass:A2}, \ref{ass:A3'}
\begin{equation}\label{eq:f6}
    f_1(t,x,z)=k(t) \Bigl( x +0.01|z|^{\alpha}+ \sin (M x) \cdot \cos(P|z|^{\alpha} ) \Bigr),
\end{equation}
and the second, for which the assumption \ref{ass:A3'} is not satisfied globally, 
\begin{equation}\label{eq:f1}
    f_2(t,x,z)=k(t)\cdot\sin(10x)\cdot P|z|^{\alpha}/M,
\end{equation}
where $k$ is the following periodic function
\begin{equation}
    k(t)=\sum_{j=0}^n \Bigl((j+1)\tau-t\Bigr)^{-1/\gamma}\cdot\mathbf{1}_{[j\tau,(j+1)\tau]}(t),
\end{equation}
which belongs to $L^p([0,(n+1)\tau])$ for $\gamma> p$. Note that due to the $k$ function the problem could be stiff if we are close to the asymptotic. Actually, it could be stiff-nonstiff according to derivative sign. \\

In what follows we provide numerical evidence of theoretical results from Theorem \ref{rate_of_conv_expl_Eul}. In particular, we implement randomized Euler scheme \eqref{expl_euler_1}-\eqref{expl_euler_11} using Python programming language. Moreover, since for the right-hand side functions \eqref{eq:f6}, \eqref{eq:f1} we do not know the exact solution $x=x(t)$, we  approximate the mean square error  \eqref{eq:L2err} with
\[
\max\limits_{0\leq j\leq n}\left\|\max_{0\leq i\leq N}|\tilde y_i^j-y_i^j|\right\|_{L^2(\Omega)}\approx\max\limits_{0\leq j\leq n}\left(\frac1K\sum_{k=1}^{K}\max_{0\leq i\leq N}|\tilde y_i^j(\omega_k)-y_i^j(\omega_k)|^2\right)^\frac12,
\]
where $K\in\mathbb{N}$, $y_i^j$ is the output of the randomized Euler scheme on the initial mesh $t_i^j\udef j\tau+ih$ and $h\udef\frac{\tau}{N}$ for $i=0,\ldots,N-1$, while $\tilde y_i^j$ is the  reference solution obtained also from the randomized Euler scheme but on the refined mesh $\tilde t_i^j\udef j\tau+i\tilde h$ and $\tilde h\udef\frac{h}{m}=\frac{\tau}{mN}$ for $i=0,\ldots,mN-1$. \\


\begin{exm}
\normalfont
In the following numerical tests we use \eqref{eq:f6} with parameters $M=10, P=100$. 

We fix the number of experiments $K=1000$ for each $N=10^l$, $l=1,\ldots,5$, and the reference solution is computed with stepsize $10^{-7}$; also, the horizon parameter is $n=5$.
We get the following results for $\gamma=2.1$: \\
letting $\alpha=0.1$, the negative mean square error slopes are
$0.46$, $0.40$, $0.41$ and $0.37$. See Figure \ref{fig:M10,K100,alpha0.1,gamma2.1,f6}; \\
letting $\alpha=0.5$, the negative mean square error slopes are
$0.44$, $0.44$, $0.39$ and $0.33$. See Figure \ref{fig:M10,K100,alpha0.5,gamma2.1,f6}; \\
letting $\alpha=1$,  the negative mean square error slopes are
$0.43$, $0.39$, $0.40$ and $0.37$. See Figure \ref{fig:M10,K100,alpha1,gamma2.1,f6}; \\
while, for $\gamma=5$: \\
letting $\alpha=0.1$, the negative mean square error slopes are
$0.78$, $0.75$, $0.71$ and $0.69$. See Figure \ref{fig:M10,K100,alpha0.1,gamma5,f6}; \\
letting $\alpha=0.5$, the negative mean square error slopes are
$0.78$, $0.74$, $0.71$ and $0.69$. See Figure \ref{fig:M10,K100,alpha0.5,gamma5,f6}; \\
letting $\alpha=1$, the negative mean square error slopes are
$0.78$, $0.75$, $0.73$ and $0.72$. See Figure \ref{fig:M10,K100,alpha1,gamma5,f6}.
\begin{figure}
\centering
\begin{subfigure}{0.32\textwidth}
    \centering
    \includegraphics[width=\textwidth]{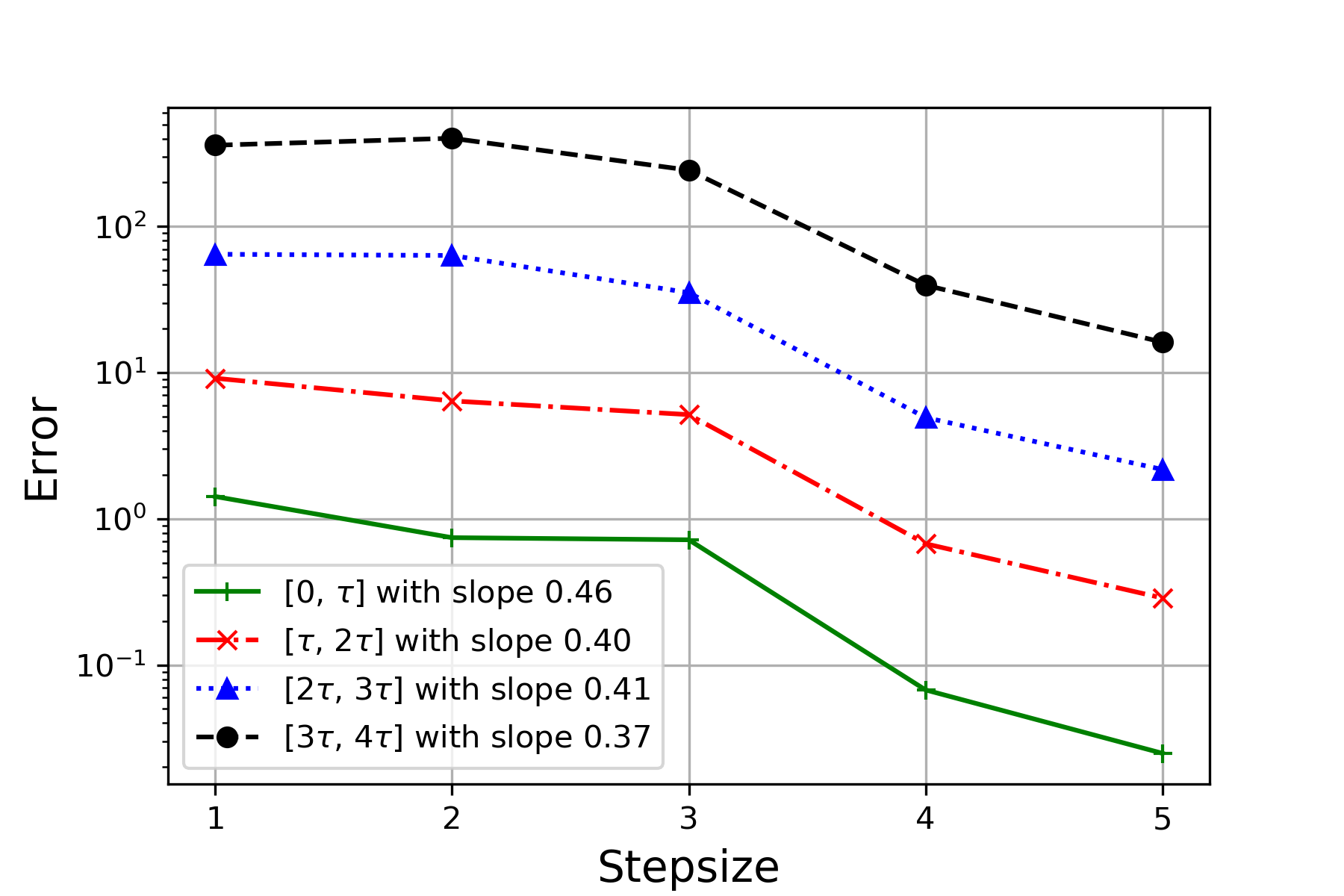}
    \caption{$\alpha=0.1$.}
    \label{fig:M10,K100,alpha0.1,gamma2.1,f6}
\end{subfigure}
\begin{subfigure}{0.32\textwidth}
    \centering
    \includegraphics[width=\textwidth]{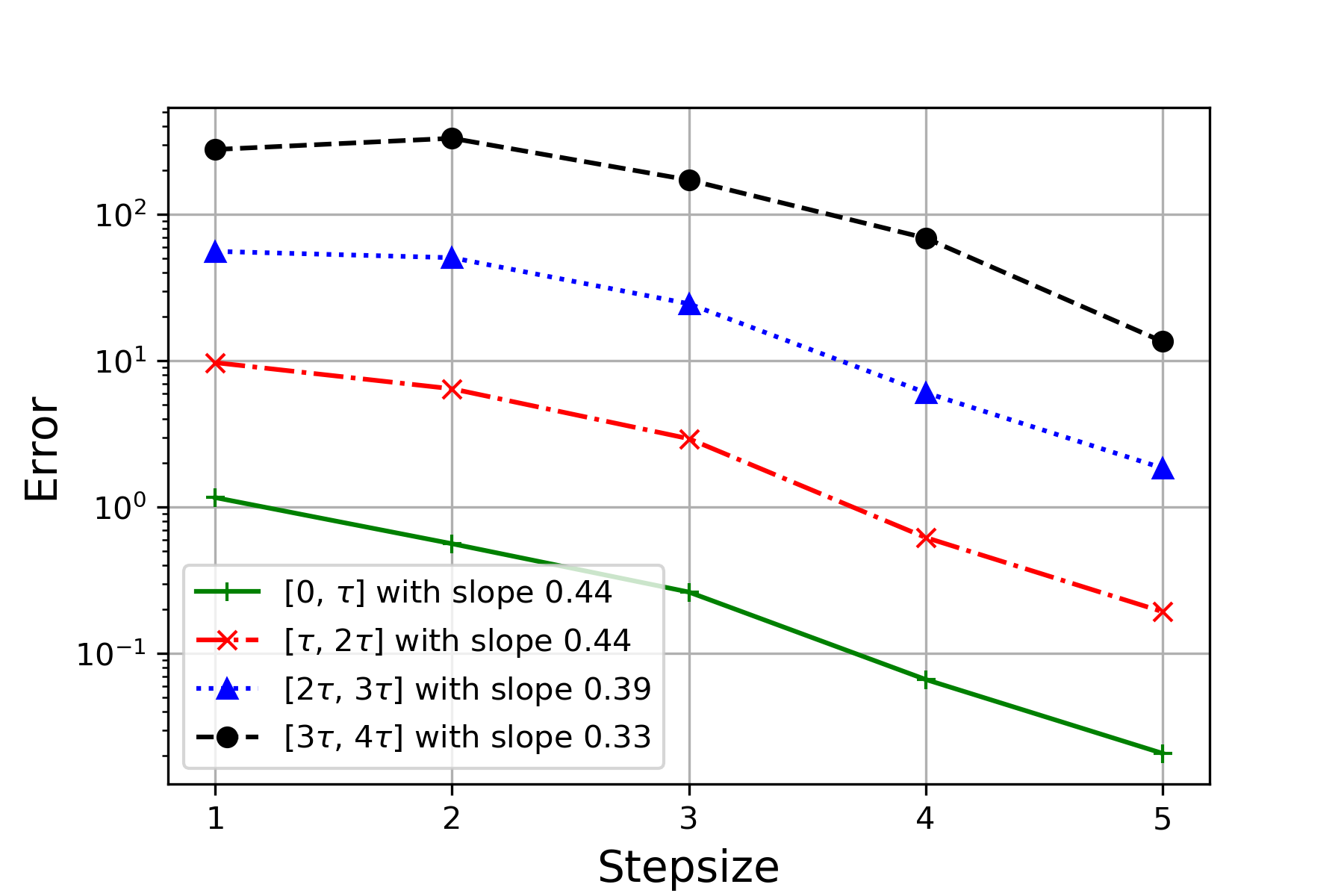}
    \caption{$\alpha=0.5$.}
    \label{fig:M10,K100,alpha0.5,gamma2.1,f6}
\end{subfigure}
\begin{subfigure}{0.32\textwidth}
    \centering
    \includegraphics[width=\textwidth]{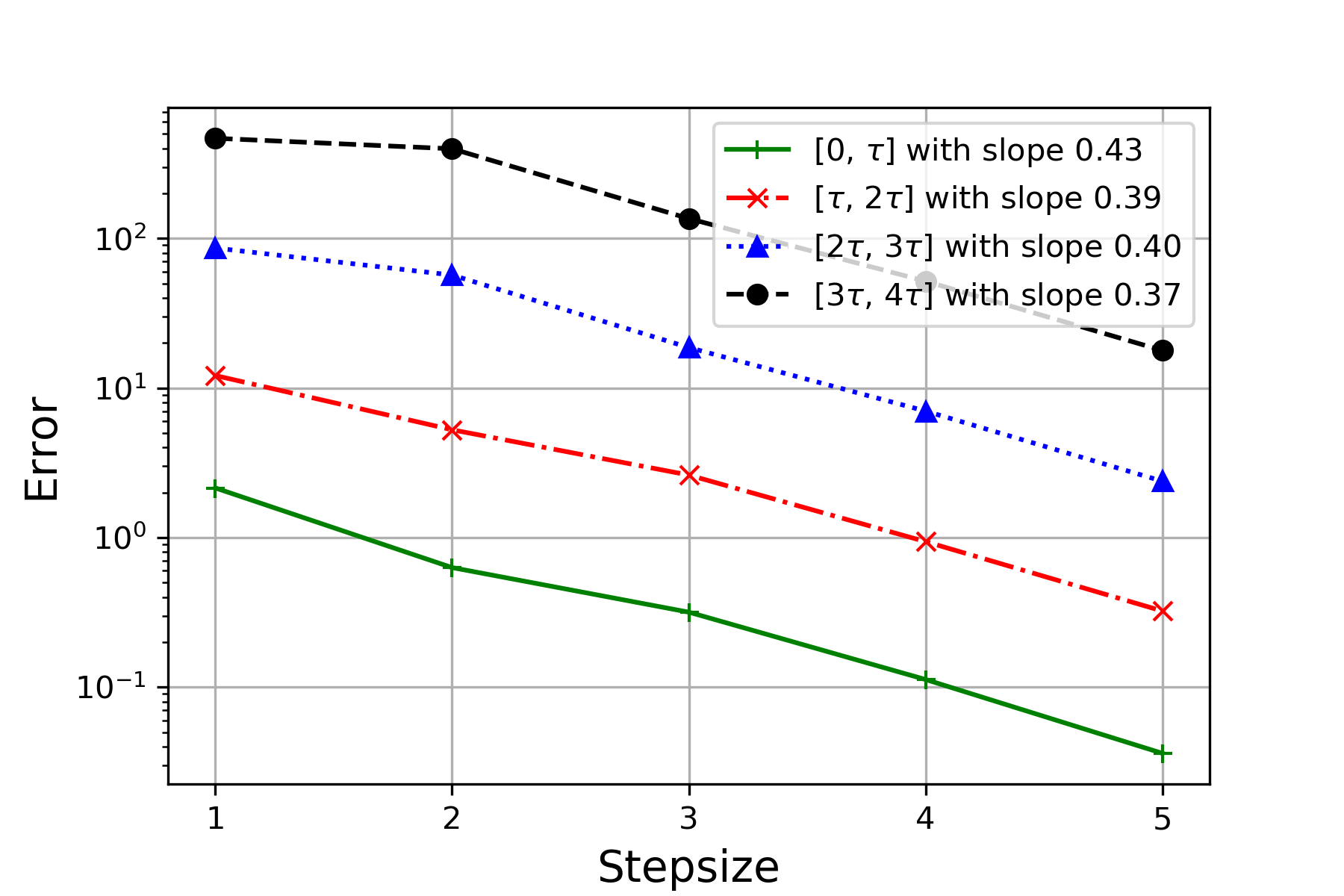}
    \caption{$\alpha=1$.}
    \label{fig:M10,K100,alpha1,gamma2.1,f6}
\end{subfigure}
\caption{Mean square errors slope for $\gamma=2.1$ and values of $\alpha=0.1,0.5,1$ in \eqref{eq:f6}, using $M=10, P=100$.}
\label{fig:mse_f6_gamma2.1}
\end{figure}
\begin{figure}
\centering
\begin{subfigure}{0.32\textwidth}
    \centering
    \includegraphics[width=\textwidth]{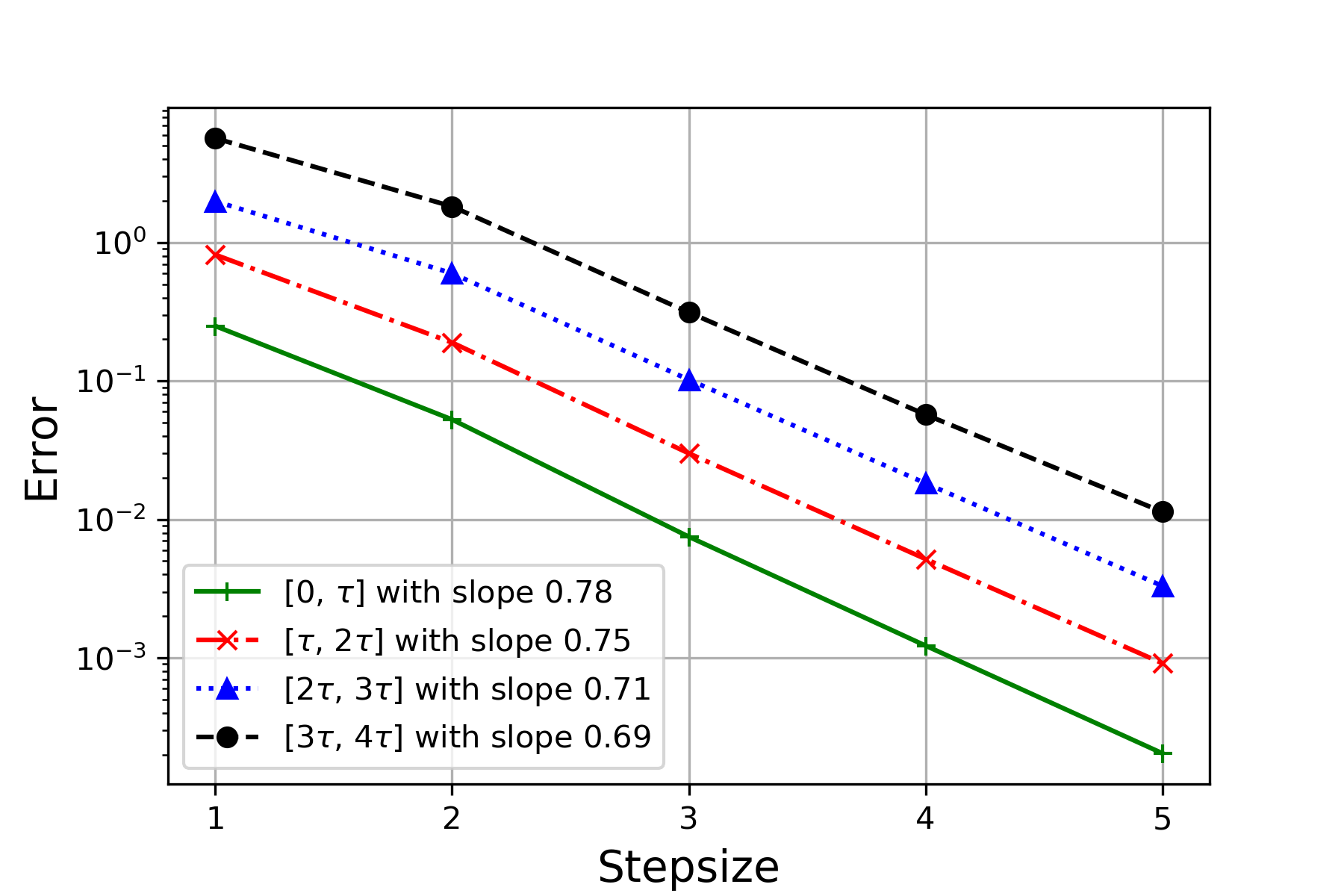}
    \caption{$\alpha=0.1$.}
    \label{fig:M10,K100,alpha0.1,gamma5,f6}
\end{subfigure}
\begin{subfigure}{0.32\textwidth}
    \centering
    \includegraphics[width=\textwidth]{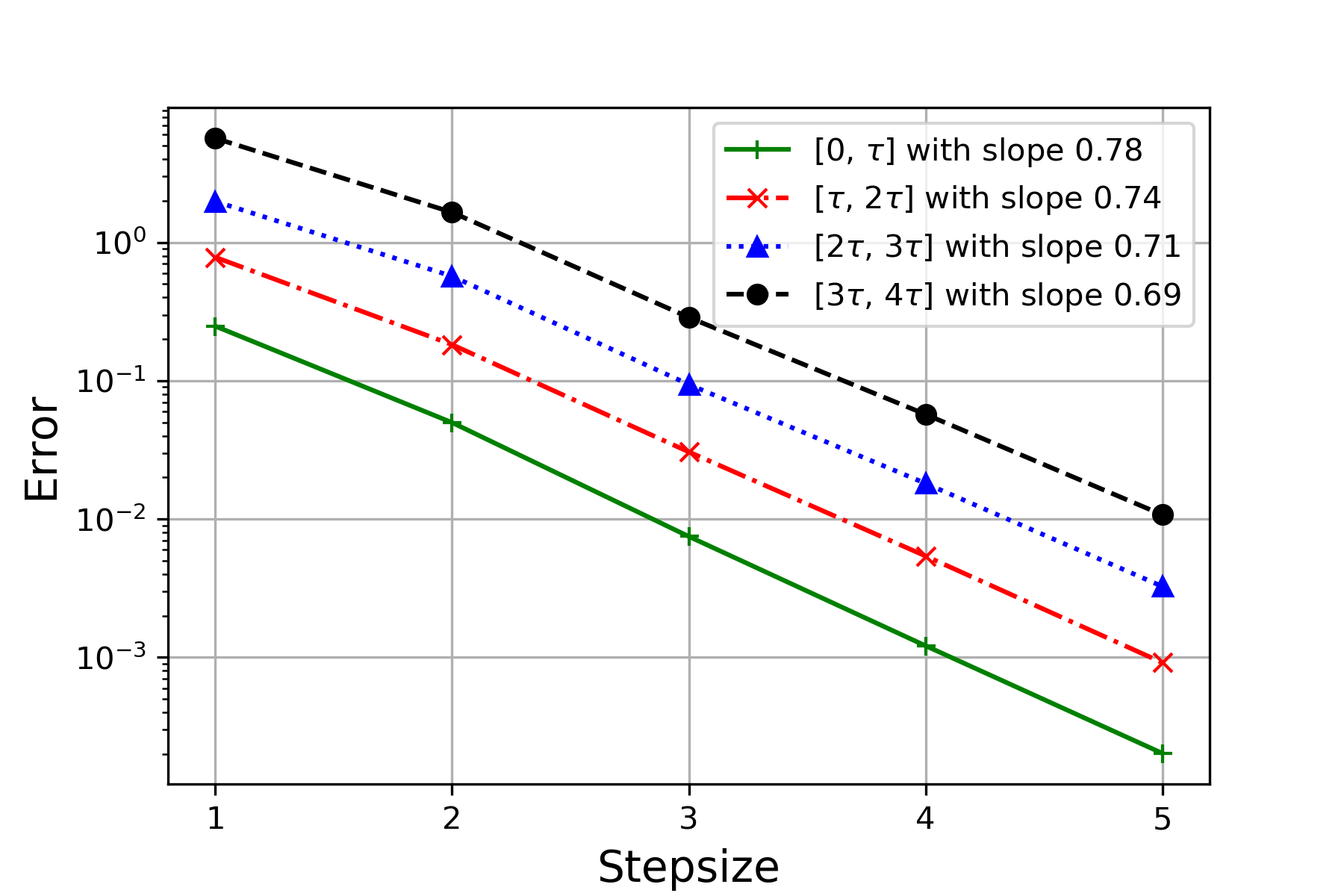}
    \caption{$\alpha=0.5$.}
    \label{fig:M10,K100,alpha0.5,gamma5,f6}
\end{subfigure}
\begin{subfigure}{0.32\textwidth}
    \centering
    \includegraphics[width=\textwidth]{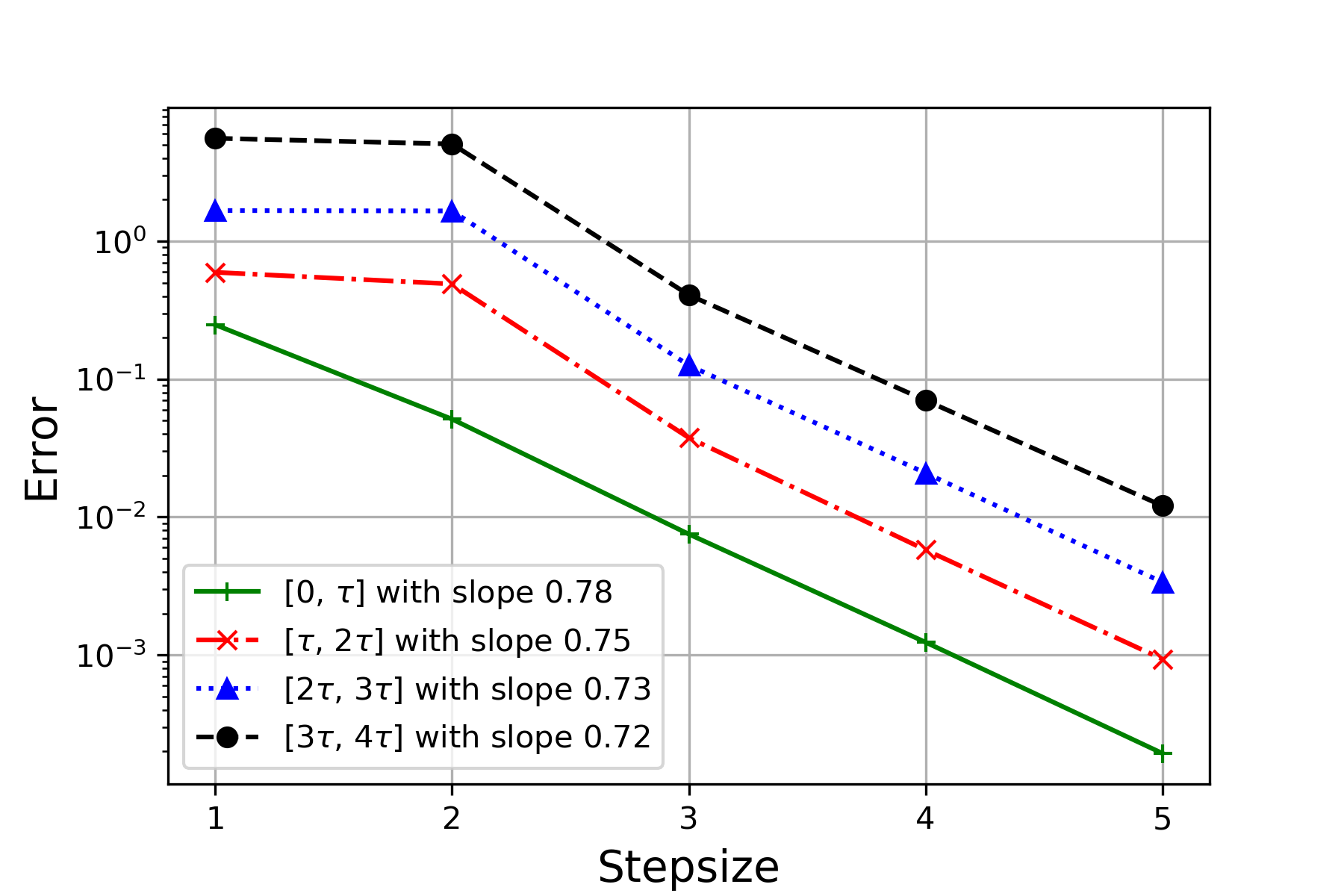}
    \caption{$\alpha=1$.}
    \label{fig:M10,K100,alpha1,gamma5,f6}
\end{subfigure}
\caption{Mean square errors slope for $\gamma=5$ and values of $\alpha=0.1,0.5,1$ in \eqref{eq:f6}, using $M=10, P=100$.}
\label{fig:mse_f6_gamma5}
\end{figure}
\end{exm}

\begin{exm}
\normalfont
In the following numerical tests we use \eqref{eq:f1} with parameters $M=10, P=100$. 
We fix the number of experiments $K=1000$ for each $N=2^l$, $l=3,\ldots,8$, and the reference solution is computed using $m=1000$; also, the horizon parameter is $n=5$.

We get the following results for $\gamma=2.1$: \\
letting $\alpha=0.1$, the negative mean square error slopes are
$0.79$, $0.77$, $0.77$ and $0.78$. See Figure \ref{fig:M10,K100,alpha0.1,gamma2.1,f2}; \\
letting $\alpha=0.5$, the negative mean square error slopes are
$0.79$, $0.78$, $0.77$ and $0.76$. See Figure \ref{fig:M10,K100,alpha0.5,gamma2.1,f2}; \\
letting $\alpha=1$,  the negative mean square error slopes are
$0.80$, $0.79$, $0.78$ and $0.77$. See Figure \ref{fig:M10,K100,alpha1,gamma2.1,f2}; \\
while, for $\gamma=5$: \\
letting $\alpha=0.1$, the negative mean square error slopes are
$0.80$, $0.78$, $0.77$ and $0.76$. See Figure \ref{fig:M10,K100,alpha0.1,gamma5,f1}; \\
letting $\alpha=0.5$, the negative mean square error slopes are
$0.79$, $0.79$, $0.78$ and $0.77$. See Figure \ref{fig:M10,K100,alpha0.5,gamma5,f1}; \\
letting $\alpha=1$, the negative mean square error slopes are
$0.80$, $0.80$, $0.79$ and $0.78$. See Figure \ref{fig:M10,K100,alpha1,gamma5,f1}.
\begin{figure}
\centering
\begin{subfigure}{0.32\textwidth}
    \centering
    \includegraphics[width=\textwidth]{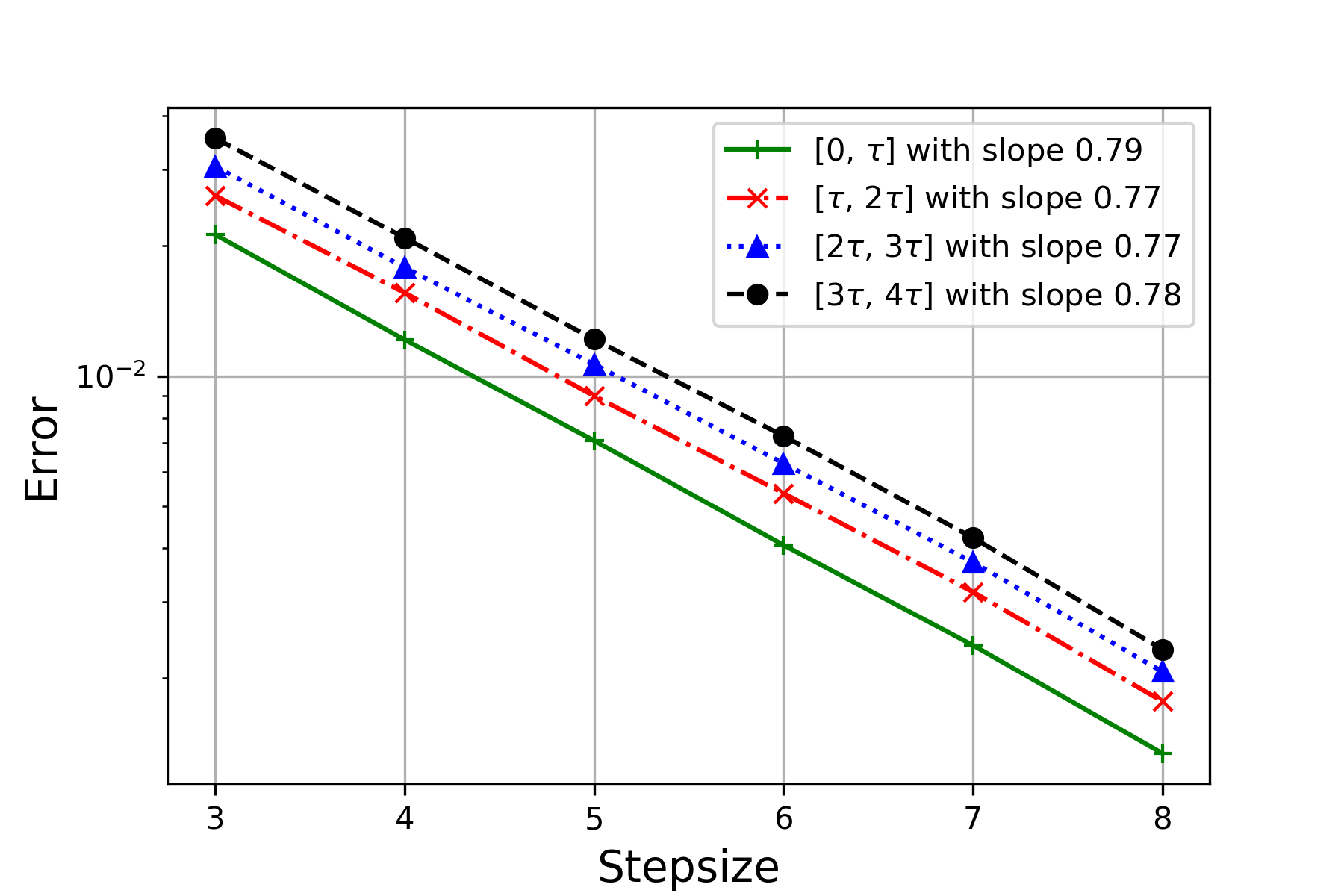}
    \caption{$\alpha=0.1$.}
    \label{fig:M10,K100,alpha0.1,gamma2.1,f2}
\end{subfigure}
\begin{subfigure}{0.32\textwidth}
    \centering
    \includegraphics[width=\textwidth]{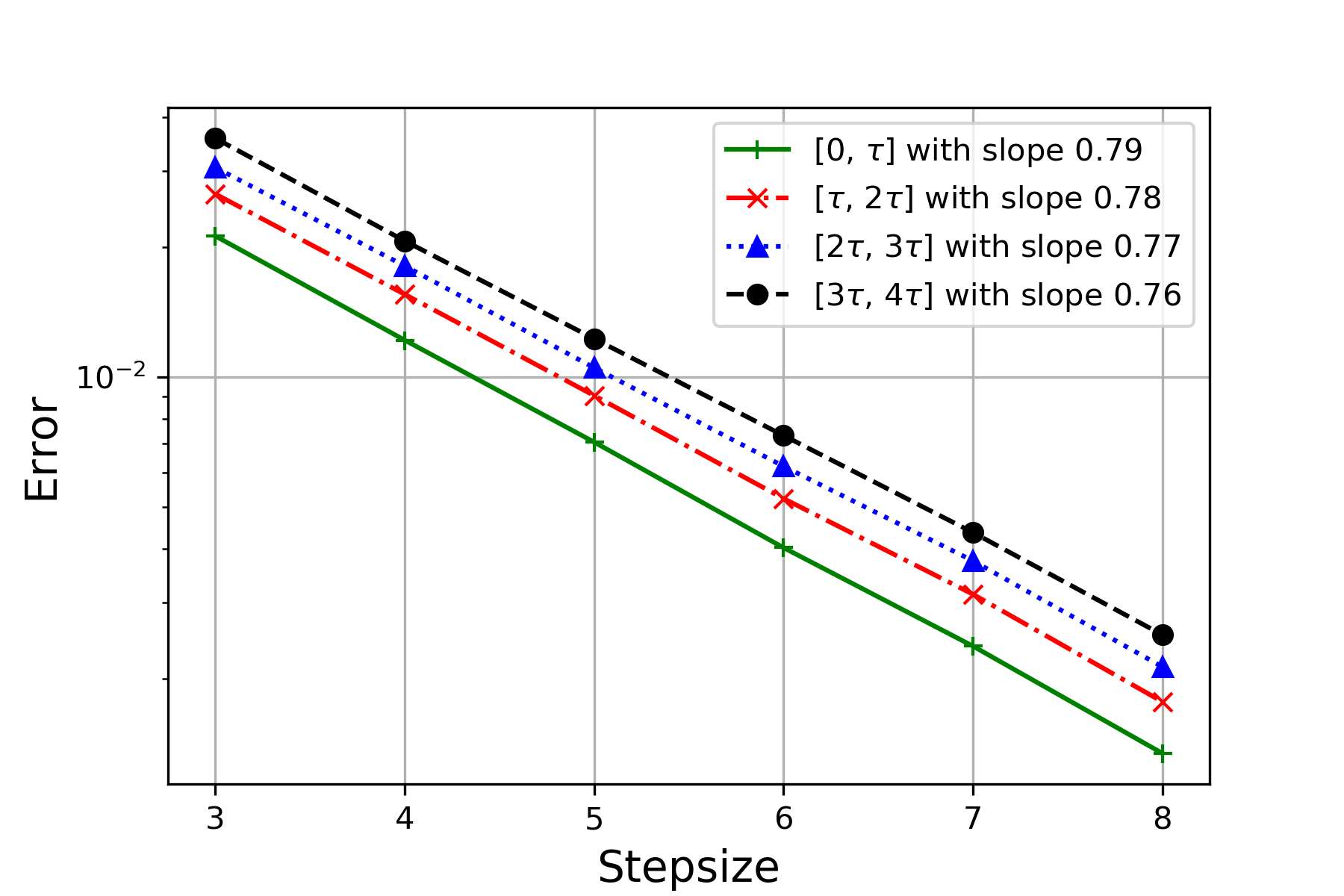}
    \caption{$\alpha=0.5$.}
    \label{fig:M10,K100,alpha0.5,gamma2.1,f2}
\end{subfigure}
\begin{subfigure}{0.32\textwidth}
    \centering
    \includegraphics[width=\textwidth]{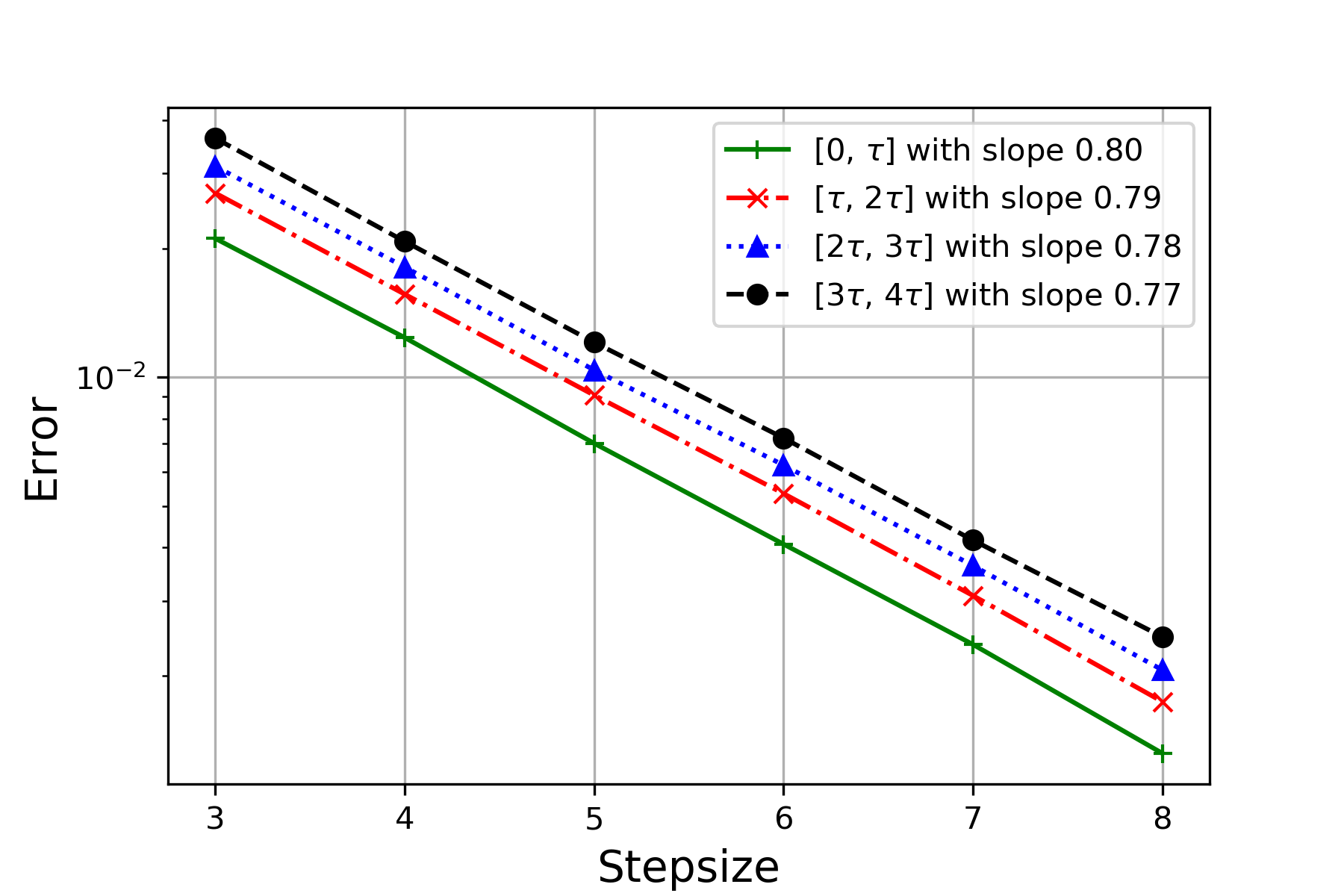}
    \caption{$\alpha=1$.}
    \label{fig:M10,K100,alpha1,gamma2.1,f2}
\end{subfigure}
\caption{Mean square errors slope for $\gamma=2.1$ and values of $\alpha=0.1,0.5,1$ in \eqref{eq:f1}, using $M=10, P=100$.}
\label{fig:mse_f1_gamma2.1}
\end{figure}
\begin{figure}
\centering
\begin{subfigure}{0.3\textwidth}
    \centering
    \includegraphics[width=\textwidth]{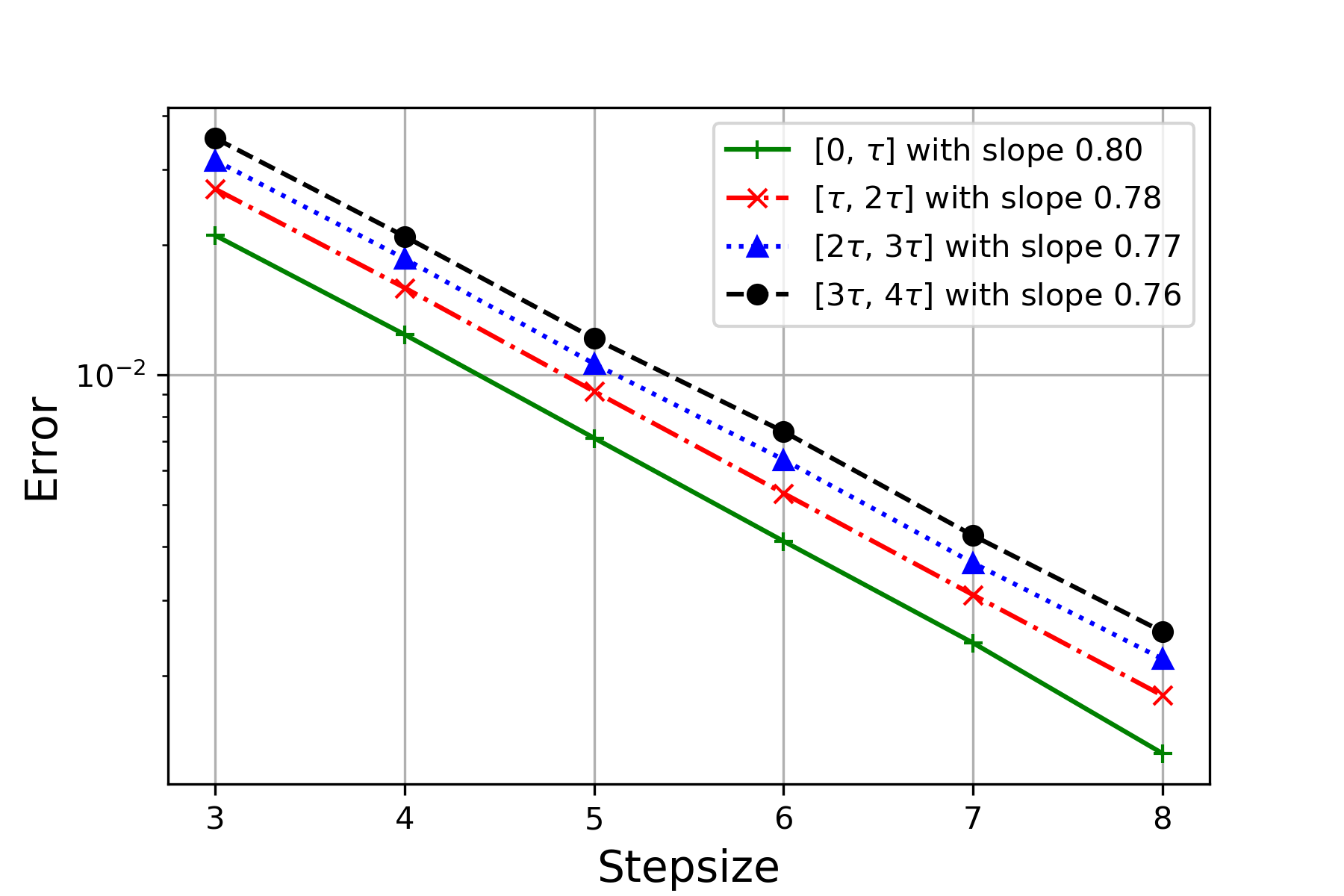}
    \caption{$\alpha=0.1$.}
    \label{fig:M10,K100,alpha0.1,gamma5,f1}
\end{subfigure}
\hfill
\begin{subfigure}{0.3\textwidth}
    \centering
    \includegraphics[width=\textwidth]{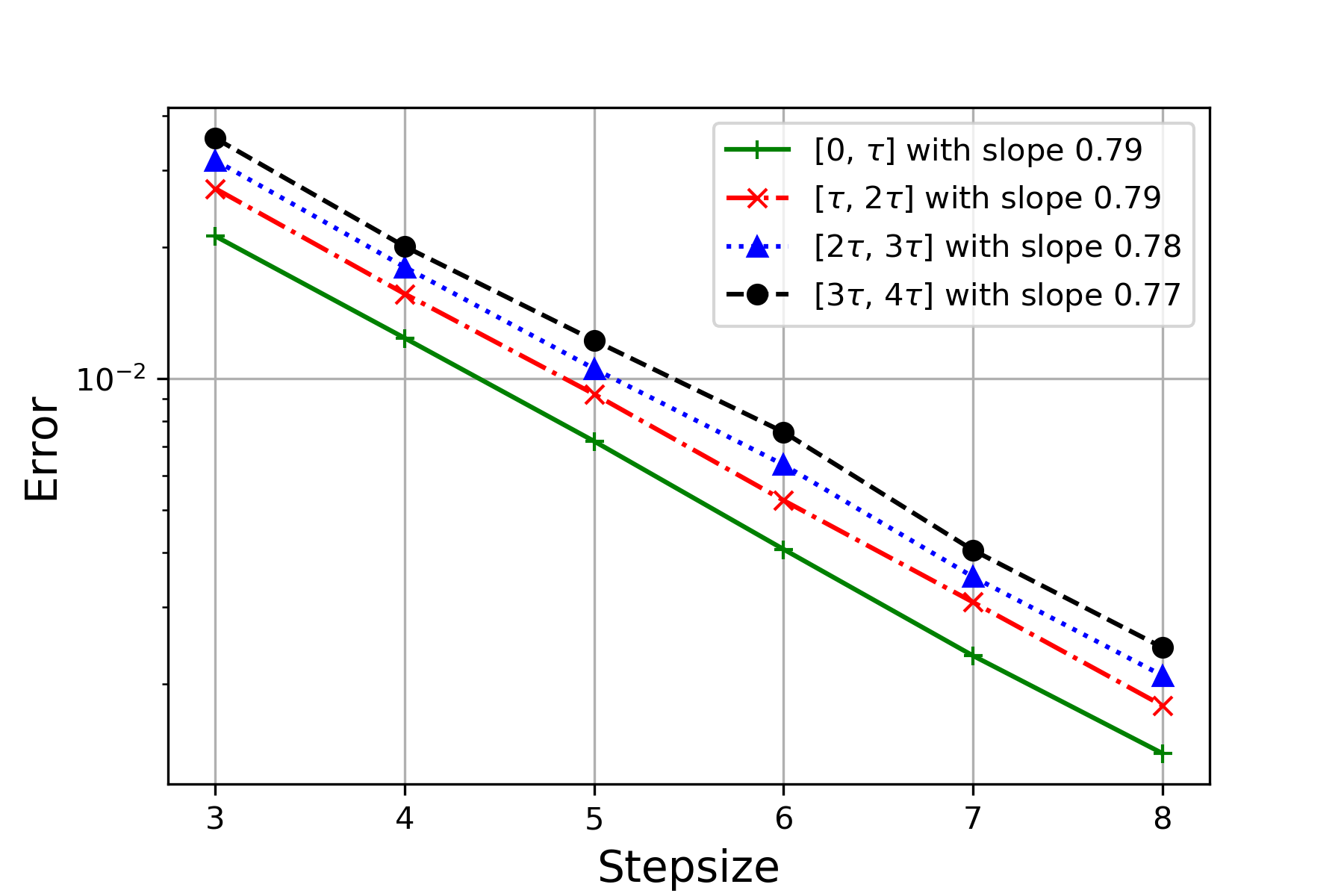}
    \caption{$\alpha=0.5$.}
    \label{fig:M10,K100,alpha0.5,gamma5,f1}
\end{subfigure}
\hfill
\begin{subfigure}{0.3\textwidth}
    \centering
    \includegraphics[width=\textwidth]{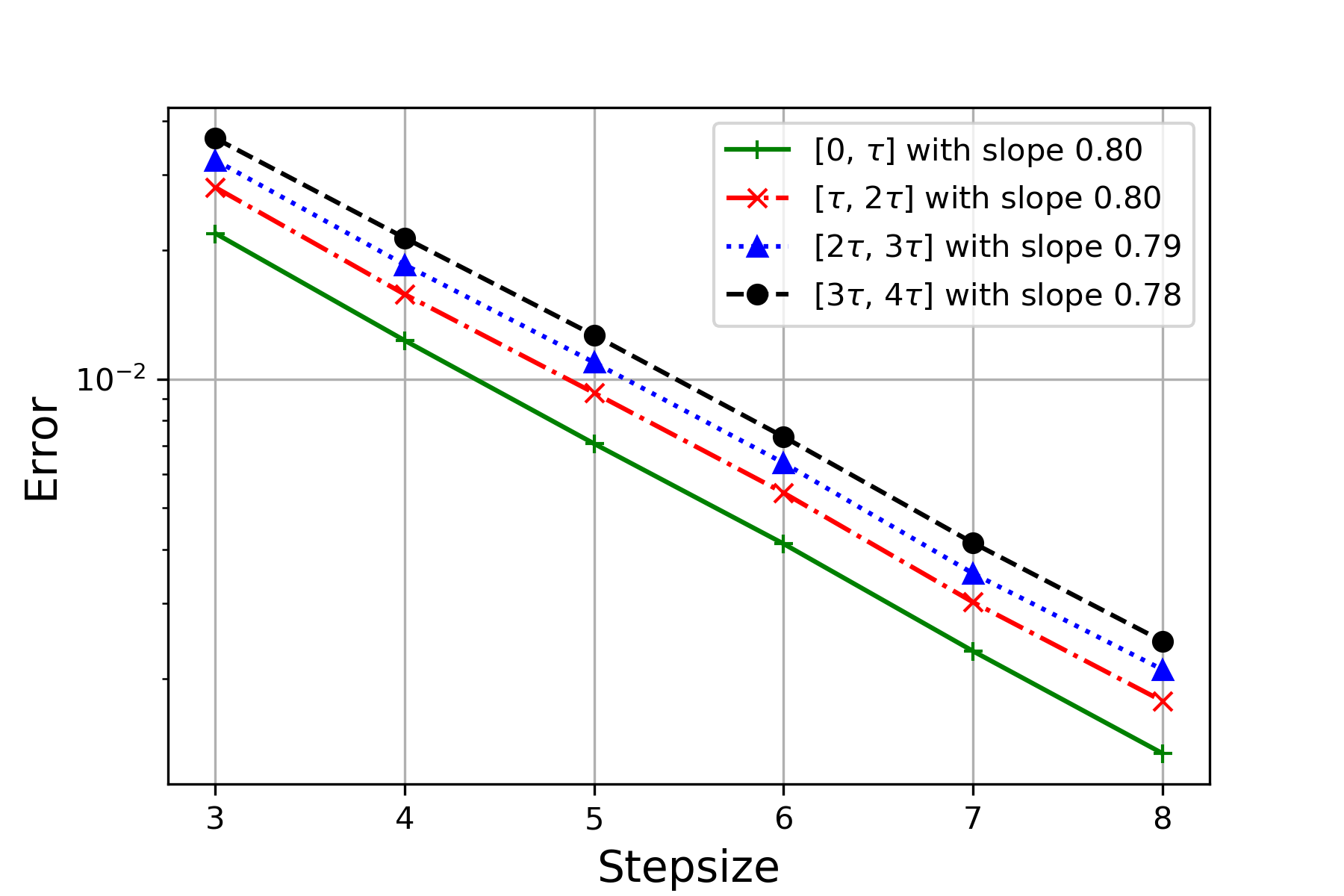}
    \caption{$\alpha=1$.}
    \label{fig:M10,K100,alpha1,gamma5,f1}
\end{subfigure}
\caption{Mean square errors slope for $\gamma=5$ and values of $\alpha=0.1,0.5,1$ in \eqref{eq:f1}, using $M=10, P=100$.}
\label{fig:mse_f1_gamma5}
\end{figure}
\end{exm}

\begin{exm}\label{ex:kainhofer}
\normalfont
From \cite{kainhofer} we consider the following DDE
\begin{equation}\label{eq:kainhofer}
\begin{cases}
x'(t)=3x(t-\tau)\sin(\lambda t), & t\geq0, \\
x(t)=x_0, & t\leq0,
\end{cases}
\end{equation}
with $\tau=1$, $\lambda=2^\nu$ and $1\leq\nu\leq16$.
Thus, with our formalism, we have to consider the function
\[
f(t,x,z)\udef 3z\sin(\lambda t),
\]
which satisfies assumptions \ref{ass:A1}, \ref{ass:A2}, \ref{ass:A3'}.  In this case the exact solution can be computed in closed form. Namely, by using the fact that for $j\geq 0$ and $t\in [j\tau,(j+1)\tau]$ it holds
\begin{equation}
    \phi_j(t)=\phi_{j-1}(j\tau)+3\int\limits_{j\tau}^t\phi_{j-1}(u-\tau)\sin(\lambda u)\mathrm{d}u.
\end{equation}
In this experiment, we consider $t\in [0,2\tau]$, thus the true solutions over $[0,\tau]$ and $[\tau,2\tau]$ are therefore
\begin{itemize}
    \item for $t\in [0,\tau]$
    \begin{equation}\label{eqn:true_sol1}
        \phi_0(t)=x_0+3x_0\frac{1-\cos(\lambda t)}{\lambda},
    \end{equation}
    \item for $t\in [\tau,2\tau]$
    \begin{align}\label{eqn:true_sol2}
    \begin{split}
     \phi_1(t)=&\phi_0(\tau)-\big(\frac{9x_0}{\lambda^2}+\frac{3x_0}{\lambda}\big)(\cos(\lambda t)-\cos(\lambda \tau))+\frac{9x_0}{2\lambda}(t-\tau) \sin(-\lambda \tau)\\
  &+\frac{9x_0}{4\lambda^2}(\cos(2\lambda t-\lambda\tau)-\cos(\lambda \tau)).    
    \end{split}
    \end{align}
\end{itemize}
Exact solutions for $\lambda=2^\nu$, $\nu=1,\ldots,9$ are depicted in Figure \ref{fig:kainhoferSol}.
\begin{figure}
    \centering
    \includegraphics[width=0.6\textwidth]{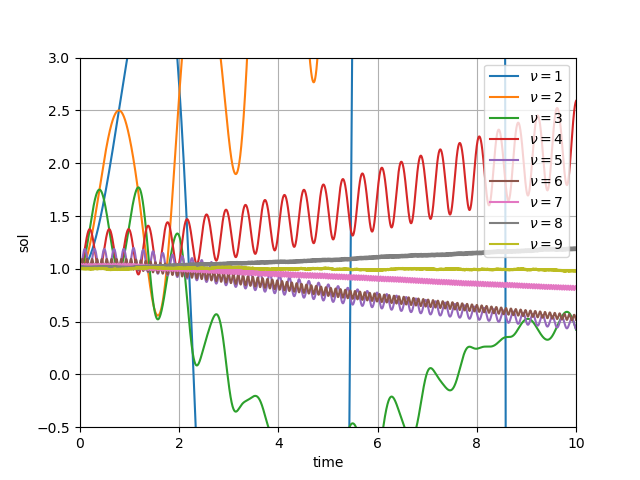}
    \caption{Solutions to problem \eqref{eq:kainhofer}.}
    \label{fig:kainhoferSol}
\end{figure}
We fix the number of experiments $K=1000$ for each $N=2^l$, $l=5,\ldots,10$, choose $x_0=1$.
With $\nu=8$ the errors from both randomized Euler and Euler schemes are depicted in Figure \ref{fig:kainhofer_nu1}, where the slopes are both beyond $1.00$ over $[0,\tau]$, and Euler method has a slower convergence over $[\tau,2\tau]$. 

\begin{figure}
\begin{subfigure}{0.45\textwidth}
    \centering
    \includegraphics[width=\textwidth]{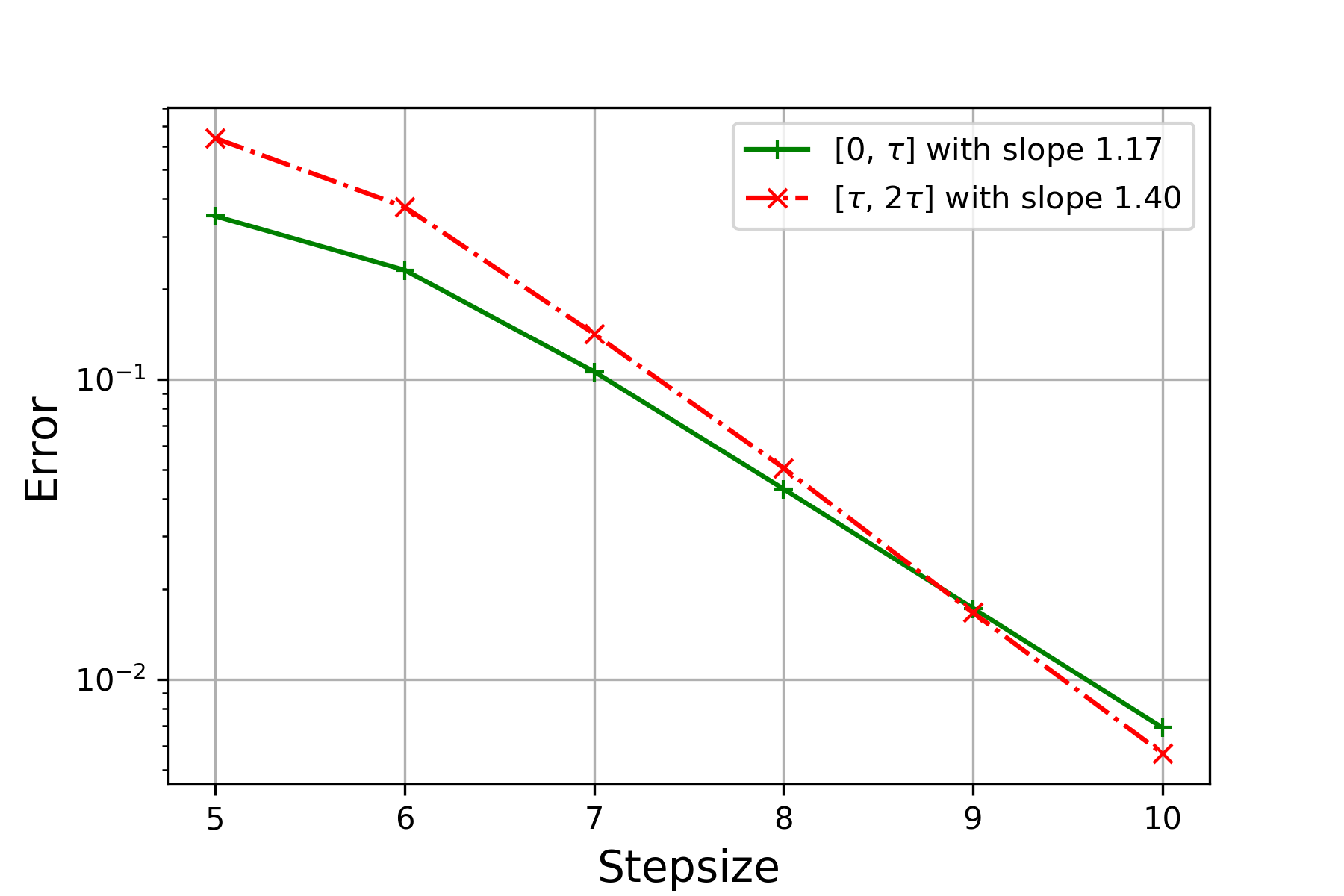}
    \caption{Randomized Euler method}
    \label{fig:kainhofer_nu1_IC1}
\end{subfigure}
\hfill
\begin{subfigure}{0.45\textwidth}
    \centering
    \includegraphics[width=\textwidth]{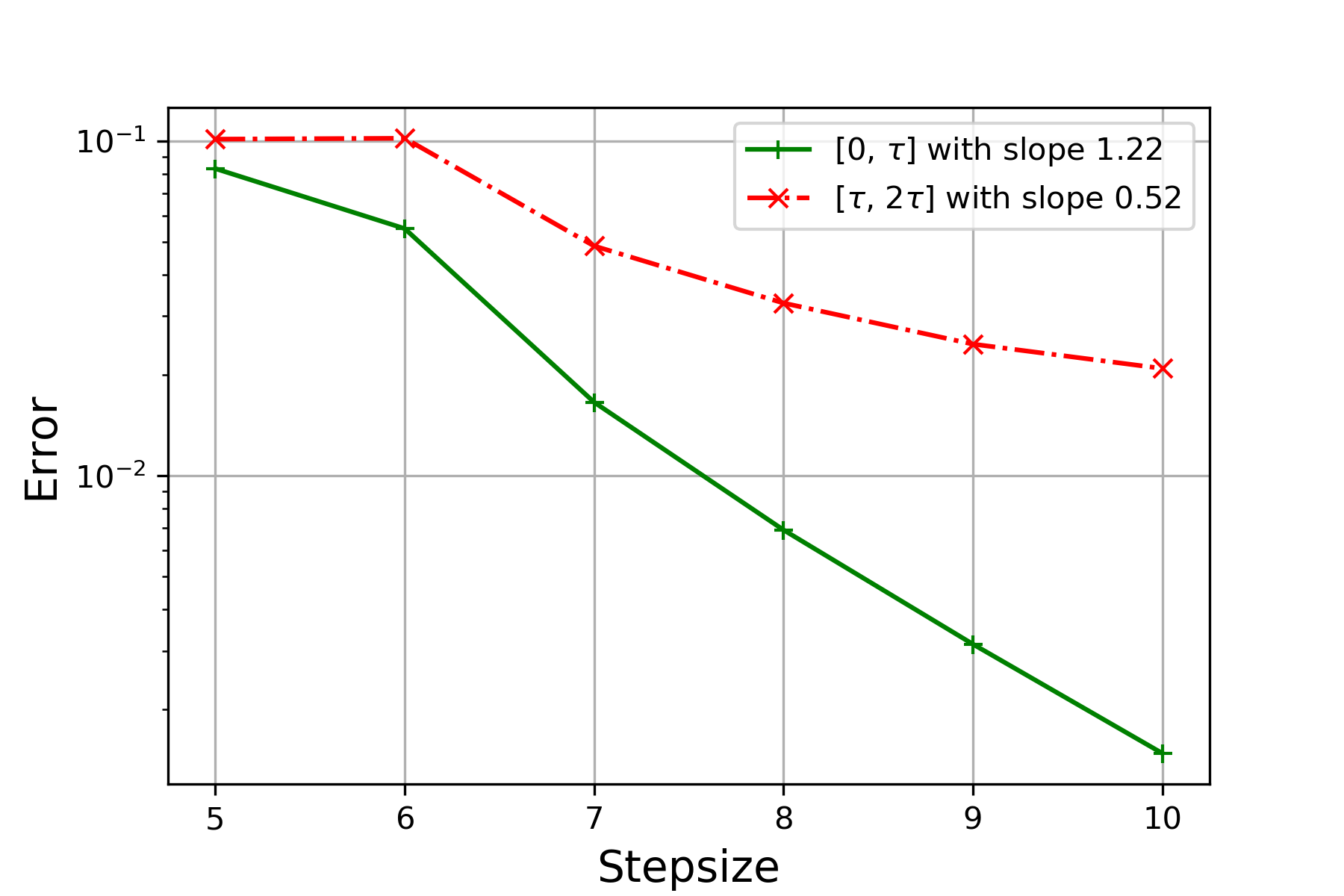}
    \caption{Euler method}
    \label{fig:kainhofer_nu1_ICm1}
\end{subfigure}
\caption{Error comparison between randomized Euler method and
Euler method for Example \ref{ex:kainhofer} with $\nu=8$ over $[0, \tau]$ and $[ \tau, 2\tau]$.}
\label{fig:kainhofer_nu1}
\end{figure}

\end{exm}

\begin{exm}\label{ex:comparison}
\normalfont
Following the example in \cite[Section 7]{kruse2017randomized} we consider the following DDE
\begin{equation}\label{eq:comparison}
\begin{cases}
x'(t)=\sin{(\lambda_1 t)}+x(t)+|x(t-1)|^\alpha, & t\geq0, \\
x(t)=1, & t\leq0.
\end{cases}
\end{equation}
Thus, with our formalism, we have to consider the function
\[
f(t,x,z)\udef \sin{(\lambda_1 t)}+x+|z|^\alpha,
\]
which satisfies assumptions \ref{ass:A1}, \ref{ass:A2}, \ref{ass:A3'}.  \\ 
In the experiment, we set  $\lambda_1=2^9 \pi$ and $\alpha=0.2$.
We compare the numerical solution of \eqref{eq:comparison} by the randomized Euler scheme \eqref{expl_euler_1}-\eqref{expl_euler_11} and its classical counter-part over $[j\tau,(j+1)\tau]$, for $j\in\{0,1,2\}$ respectively, where $\tau=1$ in this example. We approximate the error by a Monte Carlo simulation
with 1000 independent samples. Hereby, the reference solution is obtained using
the randomized Euler scheme with a finer step size of $h_{\text{ref}}$ = $2^{-16}$.

In Figure \ref{fig:comparison_nu1}, we plot the errors against the underlying step
size, i.e., the number $i$ on the x-axis indicates the corresponding simulation is based
on the step size $h=2^{-i}$ . The finest step size here is $2^{-10}$. The two sets of error
data are fitted with a linear function via linear regression respectively, where the
slope indicates the average order of convergence. It is noted that the
classical Euler scheme does not begin to converge until $i = 9$. The reason for this is, that for any coarser (equidistant) step size larger than $2^{-9}$ the classical Euler scheme cannot distinguish the term $|\sin(\lambda_1 t)|$ from the zero
function. In contrast, the randomized Euler method shows better results already for much coarser step sizes. Note that the experimental order of convergence is decreasing with $j$ for randomized Euler as expected (see the result for $\alpha<1$ in Theorem \ref{rate_of_conv_expl_Eul}).
\begin{figure}
\begin{subfigure}{0.475\textwidth}
    \centering
    \includegraphics[width=\textwidth]{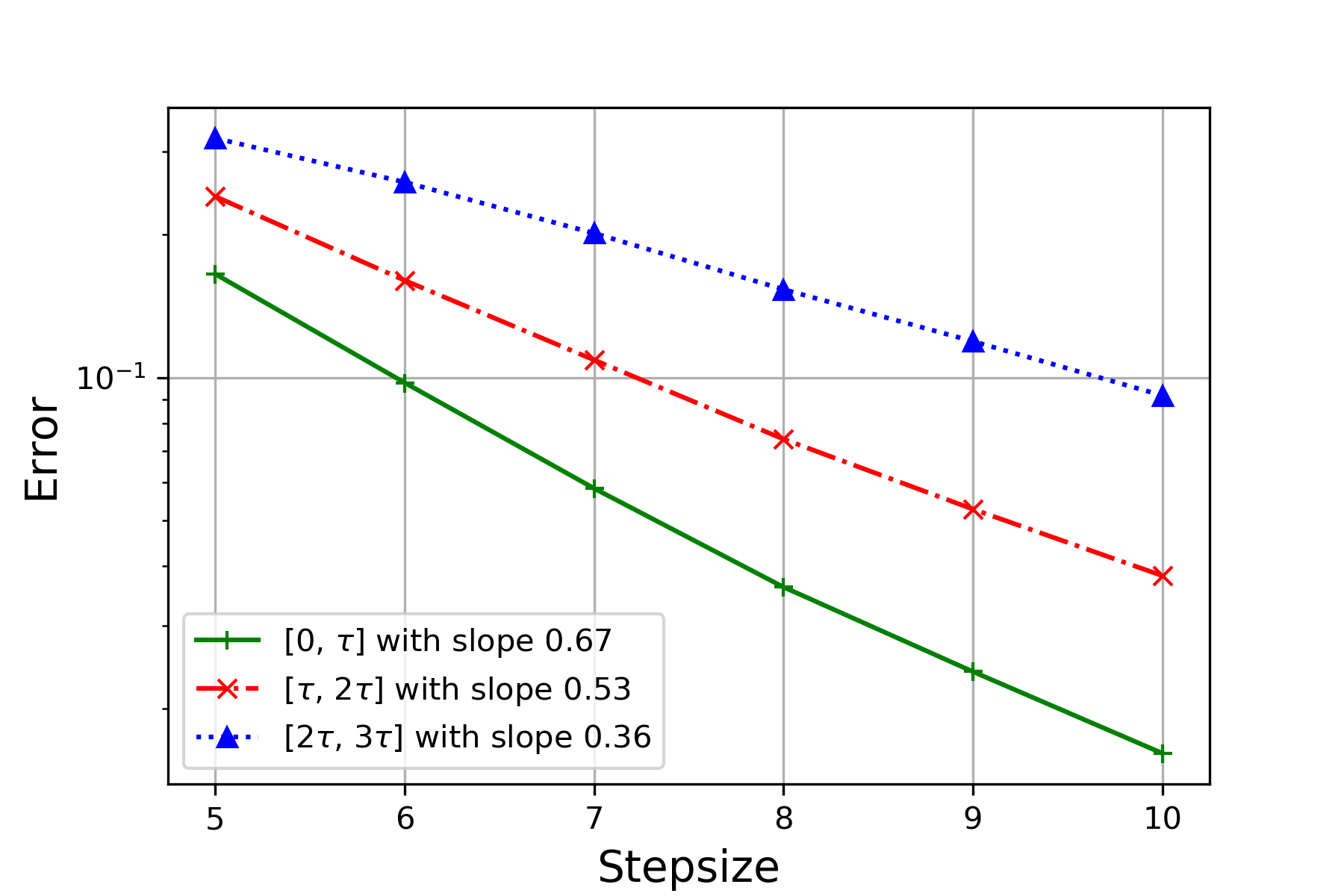}
    \caption{Randomized Euler method}
    \label{fig:comparison_nu1_IC1}
\end{subfigure}
\begin{subfigure}{0.475\textwidth}
    \centering
    \includegraphics[width=\textwidth]{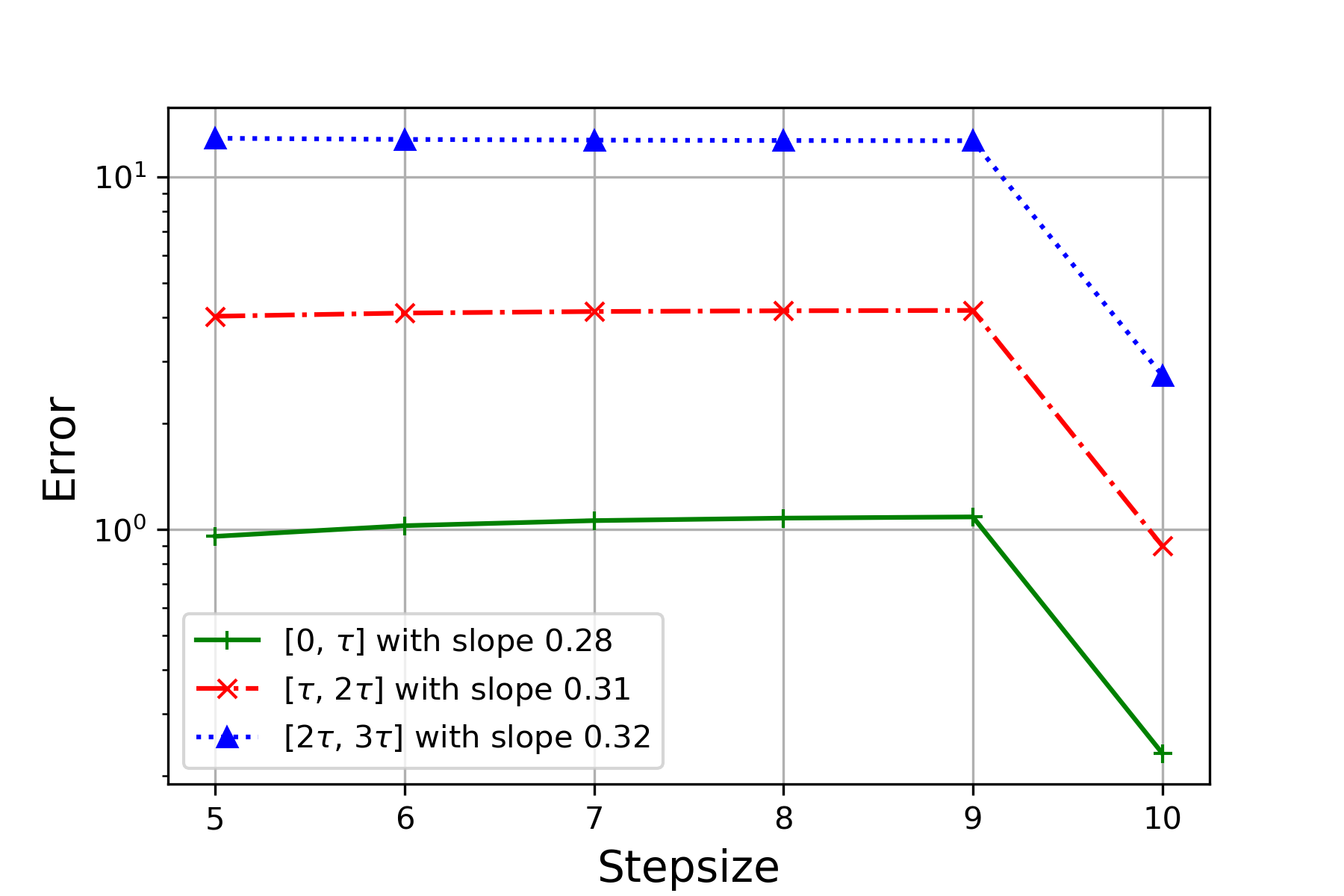}
    \caption{Euler method}
    \label{fig:comparison_nu1_ICm1}
\end{subfigure}
\caption{Error comparison between randomized Euler method and Euler method for Example \ref{ex:comparison} over $[0,\tau]$, $[\tau,2\tau]$ and $[2\tau,3\tau]$.}
\label{fig:comparison_nu1}
\end{figure}
\end{exm}

\begin{exm}\label{ex:BM}
\normalfont
Finally we consider the inspiring example in \eqref{eq:DiscDDE21} over $[0,2\tau]$ with $U_0=1$, where $Z(t)$ is given as one realization of Wiener process\footnote{It is well known that Wiener process is $0.5-\epsilon$-H\"older continuous for arbitrarily small $\epsilon>0$.}, and $a(x,z):=x+z$.  
Thus, with our formalism, we have to consider the function
\[
f(t,x,z)\udef a(x+Z(t),z+Z(t-\tau))=x+Z(t)+z+Z(t-\tau),
\]
which clearly satisfies assumptions \ref{ass:A1}, \ref{ass:A2}, \ref{ass:A3'}.  \\ 
In the experiment, we compare the numerical solution of \eqref{eq:comparison} by the randomized Euler scheme \eqref{expl_euler_1}-\eqref{expl_euler_11} and its classical counter-part over $[j\tau,(j+1)\tau]$, for $j\in\{0,1\}$ respectively, where $\tau=2$ in this example. We approximate the error by a Monte Carlo simulation
with 1000 independent samples. Hereby, the reference solution is obtained using
the Euler scheme with a finer step size of $h_{\text{ref}}$ = $2^{-16}$. Note that the realization of Wiener process is generated on the time grid with stepsize $h_{\text{ref}}$ over $[0,3\tau]$, while $Z(t)=0$ for $t<0$. The whole path evaluation $Z(t)$ for $t\in [-\tau,3\tau]$ is obtained through piecewise linear interpolation. 

In Figure \ref{fig:comparison_BM}, we plot the errors against the underlying step
size, i.e., the number $i$ on the x-axis indicates the corresponding simulation is based
on the step size $h=2^{-i}$. The finest step size here is $2^{-8}$. The two sets of error
data are fitted with a linear function via linear regression respectively, where the
slope indicates the average order of convergence. It is noted that the
classical Euler scheme has a worse performance over $[2\tau,3\tau]$. The experimental orders of convergence from both methods are higher than the theoretical results shown in Theorem \ref{rate_of_conv_expl_Eul}).
\begin{figure}
\begin{subfigure}{0.475\textwidth}
    \centering
    \includegraphics[width=\textwidth]{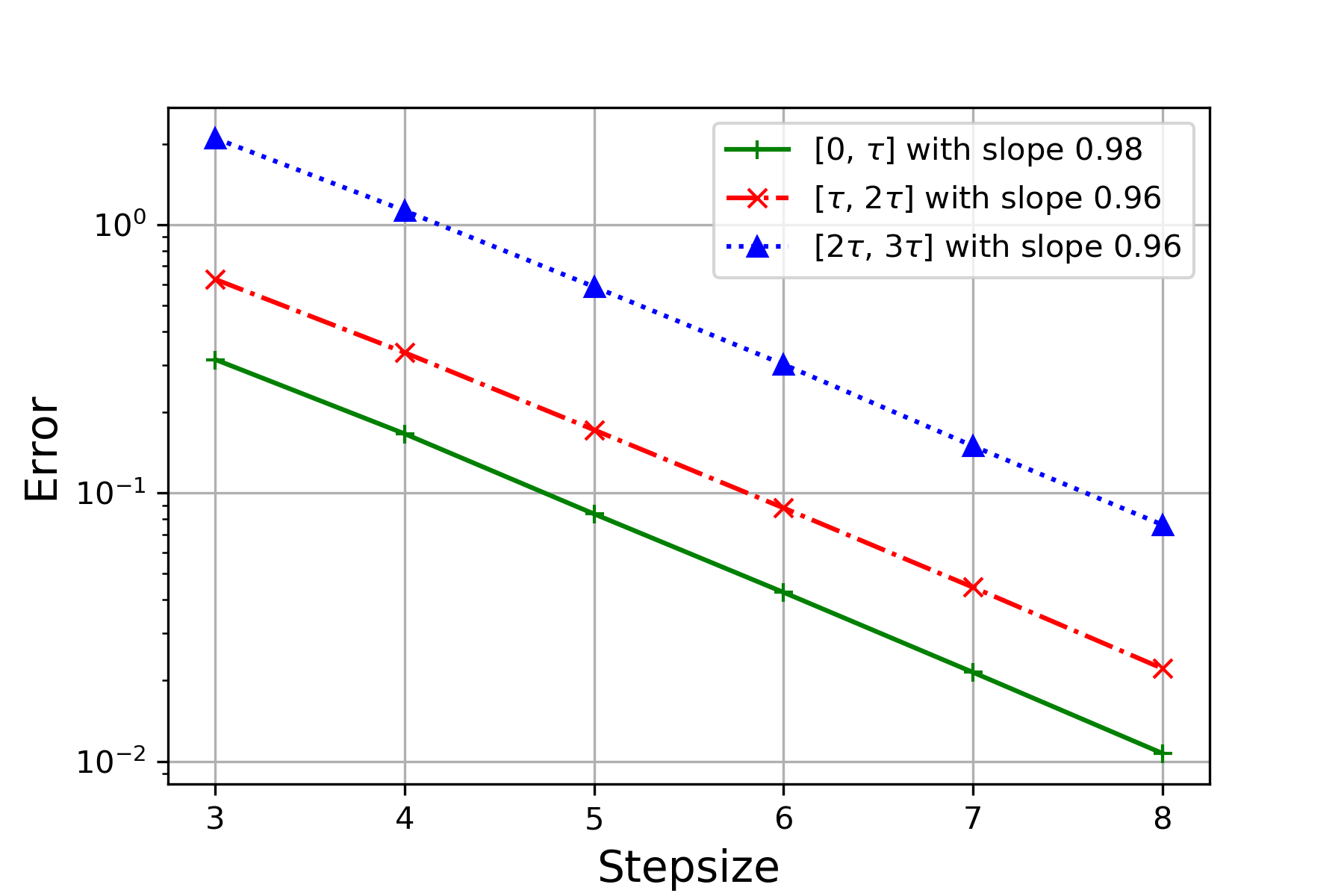}
    \caption{Randomized Euler method}
    \label{fig:comparison_BM_reuler}
\end{subfigure}
\begin{subfigure}{0.475\textwidth}
    \centering
    \includegraphics[width=\textwidth]{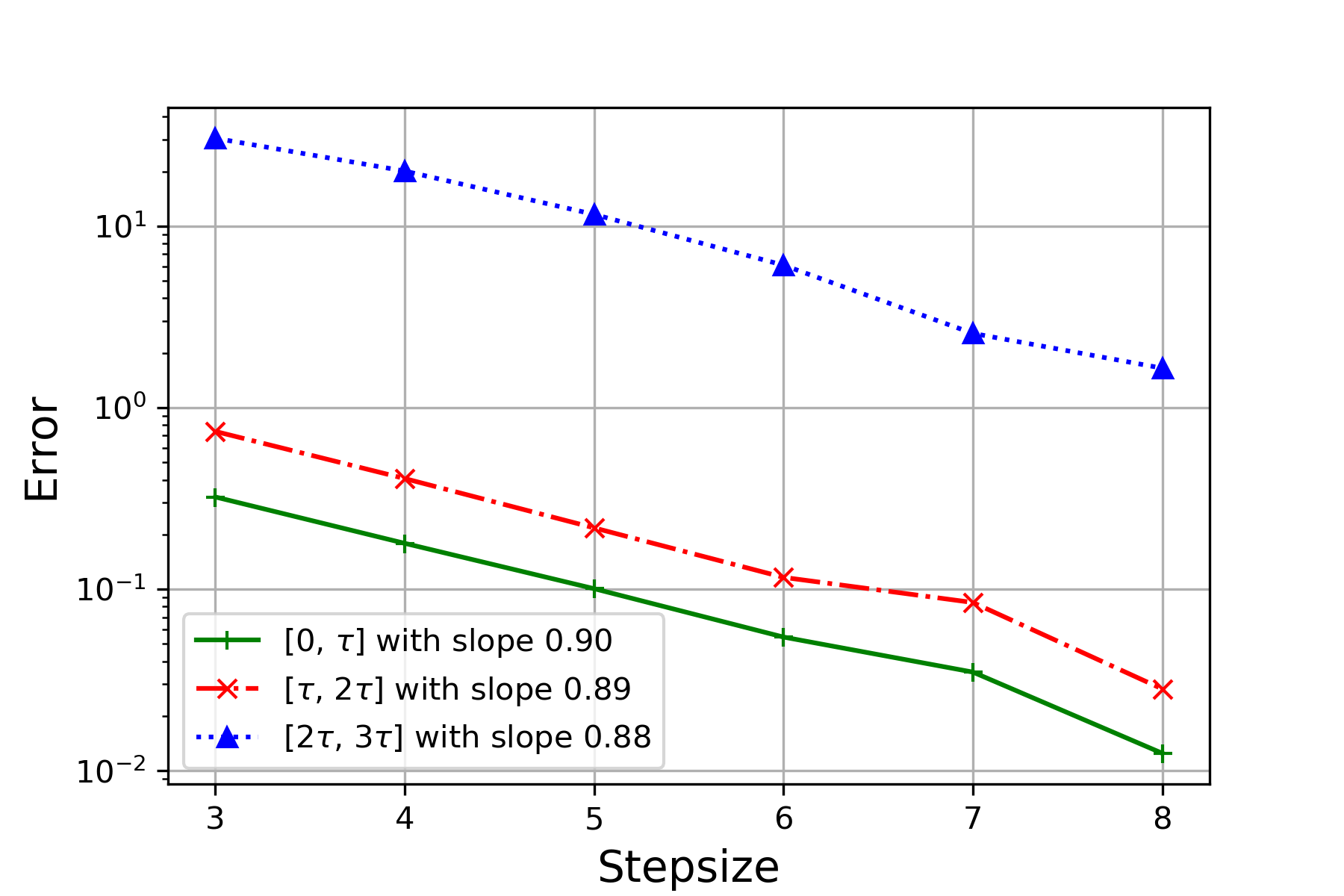}
    \caption{Euler method}
    \label{fig:comparison_BM_euler}
\end{subfigure}
\caption{Error comparison between randomized Euler method and Euler method for Example \ref{ex:BM} over $[0,\tau]$, $[\tau,2\tau]$ and $[2\tau, 3\tau]$.}
\label{fig:comparison_BM}
\end{figure}
\end{exm}
\section{Conclusions}

We investigated existence, uniqueness and numerical approximation of solutions of Carath\'eodory DDEs. In particular, we showed upper bound on the $L^p(\Omega)$-error for the randomized Euler scheme under global Lipschitz/H\"older  condition \ref{ass:A3'}. We conjecture however that the established upper error bound also holds under weaker local Lipschitz assumption. We plan to address this topic in our future work.
\section{Appendix}
We  use the following result  concerning properties of solutions of Carath\'eodory ODEs. It follows from
\cite[Theorem 2.12, pag. 252]{andresgorniewicz1}.  (Compare also with \cite[Proposition 4.2]{RKYW2017}.) 
\begin{lem}
	\label{lem_ode_1}
	Let us consider the following ODE
	\begin{equation}
	\label{ODE_1_Peano}
	    z'(t)=g(t,z(t)), \quad t\in [a,b], \quad z(a)=\xi,
	\end{equation}
	where $-\infty<a<b<+\infty$, $\xi\in\R^d$ and $g:[a,b]\times\R^d\to\R^d$ satisfies the following conditions
	\begin{enumerate}[label=\textbf{(G\arabic*)},ref=(G\arabic*)]
		\item\label{ass:G1}
		for all $t\in [a,b]$ the function $g(t,\cdot):\R^d\to\R^d$ is continuous,
		\item\label{ass:G2} 
		for all $y\in\R^d$ the function $g(\cdot,y):[a,b]\to\R^d$ is Borel measurable,
		\item\label{ass:G3} 
		there exists $K:[a,b]\to [0,+\infty)$ such that $K\in L^1([a,b])$  and for all $(t,y)\in [a,b]\times\R^d$
		\begin{displaymath}
		\|g(t,y)\|\leq K(t)(1+\|y\|),
		\end{displaymath}
		\item\label{ass:G4} 
		for every compact set $U\subset\R^d$ there exists $L_U:[a,b]\to\R^d$ such that $L_U\in L^1([a,b])$ and for all $t\in [a,b]$, $x,y\in U$
		\begin{equation}
		    \|g(t,x)-g(t,y)\|\leq L_U(t) \|x-y\|.
		\end{equation}
	\end{enumerate}	
	Then \eqref{ODE_1_Peano} has a unique absolutely continuous solution $z:[a,b]\to\R^d$ such that
	\begin{equation}
	\label{est_sol_z}
	\sup\limits_{t\in[a,b]}\|z(t)\|\leq (\|\xi\|+\|K\|_{L^1([a,b])})e^{\|K\|_{L^1([a,b])}}.
	\end{equation}
	Moreover, if $K\in L^p([a,b])$ for some $p\in (1,+\infty]$,  then for all $t,s\in [a,b]$
	\begin{equation}
	\label{lip_sol_z}
	\|z(t)-z(s)\|\leq \bar K |t-s|^{1-\frac{1}{p}},
	\end{equation}
	where $\bar K=\|K\|_{L^p([a,b])}\cdot\Bigl(1+(\|\xi\|+\|K\|_{L^1([a,b])})e^{\|K\|_{L^1([a,b])}}\Bigr)$.
\end{lem}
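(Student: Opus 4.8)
The plan is to recast \eqref{ODE_1_Peano} as the fixed-point (integral) equation
\[
z(t)=\xi+\int_a^t g(s,z(s))\,\de s, \quad t\in[a,b],
\]
and to split the proof into four independent steps: existence, the a priori bound \eqref{est_sol_z}, uniqueness, and the H\"older estimate \eqref{lip_sol_z}. First I would record that this integral formulation is equivalent to \eqref{ODE_1_Peano} within the class of absolutely continuous functions. Indeed, for such a $z$ the map $s\mapsto g(s,z(s))$ is Borel measurable: by \ref{ass:G1} and \ref{ass:G2} the function $g$ is of Carath\'eodory type, so its superposition with the continuous curve $z$ is measurable, and by \ref{ass:G3} it is dominated by $K(s)(1+\|z(s)\|)\in L^1([a,b])$. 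Hence the integral is well defined and $z$ solves \eqref{ODE_1_Peano} almost everywhere if and only if it solves the integral equation.

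For existence I would invoke the classical Carath\'eodory existence theorem, which uses only \ref{ass:G1}--\ref{ass:G3}: one constructs approximate solutions (for instance Tonelli polygons $z_m(t)=\xi+\int_a^t g(s,z_m(s-\tfrac1m))\,\de s$), which by \ref{ass:G3} form a uniformly bounded and equicontinuous family; Arzel\`a--Ascoli then yields a uniformly convergent subsequence whose limit solves the integral equation, while the linear growth in \ref{ass:G3} prevents blow-up and gives a solution that is global on all of $[a,b]$. The a priori bound \eqref{est_sol_z} I would derive directly from the integral equation and \ref{ass:G3}: writing $\|z(t)\|\le\|\xi\|+\int_a^t K(s)\,\de s+\int_a^t K(s)\|z(s)\|\,\de s$ and applying Gronwall's lemma with the $L^1$ kernel $K$ gives $\|z(t)\|\le(\|\xi\|+\|K\|_{L^1([a,b])})e^{\|K\|_{L^1([a,b])}}$.

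With \eqref{est_sol_z} established, set $M:=(\|\xi\|+\|K\|_{L^1([a,b])})e^{\|K\|_{L^1([a,b])}}$ and $U:=\bar B(0,M)$, so every solution takes values in the compact set $U$; this ordering matters, since \ref{ass:G4} is only a \emph{local} Lipschitz condition and becomes usable only once the trajectories are confined to a fixed compact set. Uniqueness then follows: if $z_1,z_2$ are two solutions, their difference satisfies $\|z_1(t)-z_2(t)\|\le\int_a^t L_U(s)\|z_1(s)-z_2(s)\|\,\de s$ with $L_U\in L^1([a,b])$, and Gronwall's lemma forces $z_1\equiv z_2$. For the H\"older estimate \eqref{lip_sol_z} under the extra hypothesis $K\in L^p([a,b])$, I would use the integral representation over $[s,t]$, bound the integrand by $K(u)(1+M)$ via \ref{ass:G3} and \eqref{est_sol_z}, and apply H\"older's inequality with conjugate exponent $q=p/(p-1)$ to obtain $\|z(t)-z(s)\|\le(1+M)\|K\|_{L^p([s,t])}|t-s|^{1-1/p}\le\bar K|t-s|^{1-1/p}$, matching the claimed constant (with the convention $1-1/p=1$ when $p=\infty$).

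The three Gronwall/H\"older computations are routine; the genuine obstacle is the existence step, because \ref{ass:G1}--\ref{ass:G2} provide only separate continuity in $y$ and measurability in $t$, not joint continuity, so Peano's theorem does not apply and one must instead establish measurability of the superposition $s\mapsto g(s,z(s))$ and run the compactness argument with care. Since this is exactly the content of \cite[Theorem 2.12]{andresgorniewicz1}, in practice I would cite that result for existence and carry out only the self-contained a priori, uniqueness, and H\"older steps above.
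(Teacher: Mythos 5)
Your proposal is correct and follows essentially the same route as the paper: existence is delegated to the cited Carath\'eodory-type result of Andres--G\'orniewicz (the paper applies it to the set-valued map $F(t,x)=\{g(t,x)\}$), the a priori bound \eqref{est_sol_z} comes from the integral form plus Gronwall, uniqueness is obtained by confining all solutions to the compact ball of radius given by \eqref{est_sol_z} so that the local condition \ref{ass:G4} with $U=B_0$ and Gronwall apply, and the H\"older estimate \eqref{lip_sol_z} follows from the integral representation, \ref{ass:G3}, and H\"older's inequality. The only cosmetic difference is your sketch of the Tonelli-polygon/Arzel\`a--Ascoli argument behind the existence theorem, which you then correctly replace by the citation, exactly as the paper does.
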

\begin{proof}
    By \ref{ass:G1}, \ref{ass:G2}, \ref{ass:G3} and by applying Theorem (2.12) from \cite[pag. 252]{andresgorniewicz1} to the set-valued mapping $F(t,x)=\{g(t,x)\}$, we get that \eqref{ODE_1_Peano} has at least one absolutely continuous solution. Moreover, any solution $z$ of \eqref{ODE_1_Peano} satisfies for all $t\in [a,b]$
    \begin{equation}
        \|z(t)\|\leq \|\xi\|+\int\limits_a^t\|g(s,z(s))\|\,\de s\leq \|\xi\|+\|K\|_{L^1([a,b])}+\int\limits_a^t K(s) \|z(s)\|\,\de s
    \end{equation}
    and by applying Gronwall's lemma (see \cite[pag. 22]{PLBR}) we get the estimate \eqref{est_sol_z}. 
    
    Let us consider the ball
    \begin{equation}
        B_0=\{y\in\mathbb\R^d \ | \ \|y\|\leq (\|\xi\|+\|K\|_{L^1([a,b])})e^{\|K\|_{L^1([a,b])}}\},
    \end{equation}
    which is a compact subset of $\R^d$. Let $z$ and $\tilde z$ are two solutions to \eqref{ODE_1_Peano}. From the consideration above we know that $z(t),\tilde z(t)\in B_0$ for all $t\in [a,b]$. Hence, by \ref{ass:G4} applied to $U:=B_0$ we get that there exists non-negative $L_U\in L^1([a,b])$ such that for all $t\in [a,b]$ 
    \begin{equation}
        \|z(t)-\tilde z(t)\|\leq \int\limits_a^t\|g(s,z(s))-g(s,\tilde z(s))\|\,\de s\leq\int\limits_a^t L_U(s) \cdot\|z(s)-\tilde z(s)\|\,\de s.
    \end{equation}
This and Gronwall's lemma (see \cite[pag. 22]{PLBR}) imply that $z(t)=\tilde z(t)$ for all $t\in [a,b]$.

Finally, for all $s,t\in [a,b]$ we get by the H\"older inequality that
\begin{equation}
    \|z(t)-z(s)\|\leq (1+\sup\limits_{a\leq t\leq b}\|z(t)\|)\int\limits_{\min\{t,s\}}^{\max\{t,s\}}K(u)\,\de u
\end{equation}
\begin{equation}
\label{est_hold_z}
    \leq (1+\sup\limits_{a\leq t\leq b}\|z(t)\|)\cdot\|K\|_{L^p([a,b])}\cdot|t-s|^{1-\frac{1}{p}}.
\end{equation}
Hence, by \eqref{est_hold_z} and \eqref{est_sol_z} we get \eqref{lip_sol_z}.
\end{proof}
\begin{rem} For the readers familiar with stochastic differential equations we stress that the existence and uniqueness of solution to \eqref{ODE_1_Peano} under the assumptions \ref{ass:G1}-\ref{ass:G4} can also be derived from \cite[Proposition 3.28, pag. 187]{PARRAS}.
\end{rem}
\section*{Acknowledgments}

F.V. Difonzo has been supported by \textit{REFIN} Project, grant number 812E4967; he also gratefully thanks INdAM-GNCS group for partial support.

P.~Przyby{\l}owicz is supported  by  the  National  Science  Centre, Poland, under project 2017/25/B/ST1/00945.
\bibliographystyle{plain}
\bibliography{biblioDDDE.bib}
\end{document}